\def\ms{\textbf{2020 Mathematics Subject Classifications}: }
\providecommand\key[1]{\par \textbf{Keywords: }#1 \vspace{0,5cm}}
\providecommand\add[2]{\textsc{{#1}} {\newline \indent \textbf{ Email(s): }#2}}
\newtheorem{theorem}{Theorem}[section]
\newtheorem{proposition}[theorem]{Proposition}
\newtheorem{corollary}[theorem]{Corollary}
\newtheorem{definition}[theorem]{Definition}
\begin{document}
\setlength{\parindent}{0pt}
\title{More about cofinally Bourbaki quasi-complete metric spaces}

\author{Argha Ghosh $^{1*}$}

\date{}
\maketitle

\begin{abstract}
We characterize cofinally Bourbaki quasi-complete metric spaces and their completions in terms of certain Lipschitz-type functions. To this end, we introduce and study a new class of functions, namely strongly uniformly locally Lipschitz functions, which lie strictly between Lipschitz functions and uniformly locally Lipschitz functions. We show that a metric space $\langle X, d \rangle$ is cofinally Bourbaki quasi-complete if and only if the class of strongly uniformly locally Lipschitz functions on $\langle X, d \rangle$ coincides with the (a priori) larger class of locally Lipschitz functions. Moreover, the completion of $\langle X, d \rangle$ is cofinally Bourbaki quasi-complete if and only if the class of strongly uniformly locally Lipschitz functions agrees with the class of Cauchy-Lipschitz functions. Finally, we provide several characterizations of cofinally Bourbaki quasi-complete metric spaces and their completions using functions that preserve certain classes of Cauchy-type sequences. \\
\end{abstract}

\ms {Primary 26A16, 54E40, 26A99; Secondary 26A15,
54E50, 54D99.}\\

\key {cofinally Bourbaki quasi-completeness, strongly uniformly locally Lipschitz functions, uniformly star superparacompact spaces, between complete and compact spaces, Cauchy-type sequences }\\


\add {Department of Mathematics, Manipal Institute of Technology, Manipal Academy of Higher Education, Manipal, Karnataka, India $^{1*}$ (corresponding author) 
	\\ 
}
	{argha.ghosh@manipal.edu $^{1*}$ (corresponding author) 
	}

\section{Introduction}

In classical analysis, completeness and compactness are among the most fundamental properties of metric spaces. A metric space is said to be \emph{complete} if every Cauchy sequence converges, and \emph{compact} if every sequence has a convergent subsequence. Although these properties are logically independent, each imposes a strong form of sequential regularity. Nevertheless, there exist metric spaces that are neither complete nor compact but retain structural features reminiscent of these classical notions. Over the past two decades, this observation has led to the development of various intermediate forms of completeness, designed to capture and extend the core ideas of convergence, boundedness, and continuity.

One such prominent generalization is the class of \emph{Atsuji spaces}, also known as \emph{uniform continuity spaces} (UC-spaces). A metric space $\langle X, d \rangle$ is called a UC-space if every real-valued continuous function on $X$ is uniformly continuous. Equivalently, $X$ is a UC-space if and only if every pseudo-Cauchy sequence consisting of distinct terms clusters. A sequence $\langle x_n \rangle$ in $X$ is called \emph{pseudo-Cauchy} if for every $\varepsilon > 0$ and every $n \in \mathbb{N}$, there exist indices $j > k \geq n$ such that $d(x_j, x_k) < \varepsilon$. The theory of UC-spaces, which originated with Doss \cite{Doss1947}, which was further investigated by Nagata \cite{Nagata1950} and Atsuji \cite{Atsuji1958}, has since attracted broad attention. A comprehensive survey of equivalent conditions for a metric space to be UC can be found in \cite{Jain2006}.

Another significant development in this area was the introduction of \emph{cofinally complete} metric spaces by Howes \cite{Howes1971}. A sequence $\langle x_n \rangle$ in $X$ is called \emph{cofinally Cauchy} if for every $\varepsilon > 0$, there exists an infinite subset $N_\varepsilon \subseteq \mathbb{N}$ such that $d(x_n, x_m) < \varepsilon$ for all $n, m \in N_\varepsilon$. The space is cofinally complete if every such sequence has a cluster point. The study of these spaces gained renewed attention through the work of Beer \cite{Beer2008}, provided a variety of new characterizations via  a geometric functional. In a subsequent paper by Garrido \cite{Beer2015}, this approach was further extended via Lipschitz-type mappings.

This line of investigation was further developed by Aggarwal and Kundu \cite{Aggarwal2016}, and subsequently by Gupta and Kundu \cite{Gupta2020, Gupta2021}, who provided characterizations of cofinally complete spaces in terms of geometric functionals and certain classes of mappings. A key concept introduced in this context is that of \emph{cofinally Cauchy regular} (CC-regular) functions—mappings that send cofinally Cauchy sequences to cofinally Cauchy sequences \cite{Aggarwal2017}. For a detailed account of cofinally completeness and related notions, we refer the reader to the monograph \cite{Kundu2023}.

A complementary approach found in several articles focuses on the role of chainable structures in defining and characterizing completeness-type properties of metric spaces. This framework relies not only on the local behavior of the metric but also on how sets and sequences interact with iterated neighborhoods and chains of small steps. A metric space $\langle X, d \rangle$ is said to be \emph{$\varepsilon$-chainable} if any two points in $X$ can be joined by an $\varepsilon$-chain, i.e., a finite sequence $\{x_0, x_1, \ldots, x_n\}$ with $x_0 = x$, $x_n = y$ and $d(x_{i-1}, x_i) < \varepsilon$ for all $i$. The $\varepsilon$-chainable component of $x$, denoted $S^{\infty}_d(x, \varepsilon)$, is defined recursively by $S^1_d(x, \varepsilon) := S_d(x, \varepsilon)$ and $S^{n}_d(x, \varepsilon) := \left(S^{n-1}_d(x, \varepsilon)\right)^\varepsilon$, with $S^{\infty}_d(x, \varepsilon) := \bigcup_{n} S^n_d(x, \varepsilon)$. A subset $A \subseteq X$ is called \emph{Bourbaki bounded} (or finitely chainable) if for every $\varepsilon > 0$, there exist $m \in \mathbb{N}$ and finitely many points $p_1, \ldots, p_k \in X$ such that $A \subseteq \bigcup_{i=1}^k S^m_d(p_i, \varepsilon)$ \cite{BeerBook, Garrido2014, Kundu2017}.

To characterize finite chainable metric spaces sequentially, Garrido and Meroño \cite{Garrido2014} introduced the notions of \emph{Bourbaki--Cauchy} and \emph{cofinally Bourbaki--Cauchy} sequences. A sequence $\langle x_n \rangle$ in $X$ is said to be \emph{Bourbaki--Cauchy} if for every $\varepsilon > 0$, there exist $m \in \mathbb{N}$, $n_0 \in \mathbb{N}$, and $p \in X$ such that
\[
  x_n \in S^m_d(p, \varepsilon) \quad \text{for all } n \geq n_0.
\]
Similarly, $\langle x_n \rangle$ is said to be \emph{cofinally Bourbaki--Cauchy} if for every $\varepsilon > 0$, there exist an infinite set $N_\varepsilon \subseteq \mathbb{N}$, $m \in \mathbb{N}$, and $p \in X$ such that
\[
  x_n \in S^m_d(p, \varepsilon) \quad \text{for all } n \in N_\varepsilon.
\]
A metric space $\langle X, d \rangle$ is called \emph{Bourbaki complete} (respectively, \emph{cofinally Bourbaki complete}) if every Bourbaki--Cauchy (respectively, cofinally Bourbaki--Cauchy) sequence in $X$ has a cluster point.

Further characterizations of Bourbaki complete metric spaces, as well as their completions, were provided by Aggarwal and Kundu \cite{Aggarwal2017M} in terms of mappings that preserve Bourbaki--Cauchy sequences, referred to as \emph{BC-regular functions}, along with certain geometric functionals. A comprehensive treatment of Bourbaki completeness and related notions may be found in the doctoral thesis of Meroño \cite{Thesis}.

In analogy with the theory of cofinally complete metric spaces, Aggarwal and Kundu \cite{Aggarwal2017} provided some characterizations of cofinally Bourbaki complete spaces. This line of investigation was subsequently extended by Gupta and Kundu \cite{Gupta2020}, who developed several additional characterizations, including characterizations of their completions, based on mappings that preserve Bourbaki--Cauchy-type sequences. In particular, Aggarwal and Kundu \cite{Aggarwal2017} introduced and studied the class of \emph{cofinally Bourbaki--Cauchy regular} (CBC-regular) functions---mappings that send cofinally Bourbaki--Cauchy sequences to cofinally Bourbaki--Cauchy sequences. These sequence-preserving mappings play a role analogous to CC-regular functions in the cofinal Cauchy setting and have proven instrumental in capturing structural properties associated with cofinally Bourbaki completeness.

More recently, Adhikary et al. \cite{Adhikary2024} (see also \cite{Das2021}) introduced the notions of \emph{qC-precompact} and \emph{Bourbaki quasi-complete} metric spaces. A sequence is called \emph{Bourbaki quasi-Cauchy} if there exists $n_0 \in \mathbb{N}$ such that for every $\varepsilon > 0$, all terms $x_j, x_k$ with $j, k \geq n_0$ can be joined by an $\varepsilon$-chain. A space is said to be Bourbaki quasi-complete if every such sequence has a cluster point. Among other results, the authors established that a metric space is qC-precompact if and only if every sequence admits a Bourbaki--Cauchy subsequence. Several characterizations were also provided in terms of Lipschitz-type and ward-continuous functions.

In \cite{Das2024}, Das et al. introduced and explored the notion of \emph{cofinally Bourbaki quasi-complete} uniform spaces and metric spaces. In this setting, a sequence $\langle x_n \rangle$ is said to be \emph{cofinally Bourbaki quasi-Cauchy} if for every $\varepsilon > 0$, there exists an infinite subset $N_\varepsilon \subseteq \mathbb{N}$ such that $x_j$ and $x_k$ can be joined by an $\varepsilon$-chain for any $j, k\in N_\varepsilon$. A space is cofinally Bourbaki quasi-complete if every such sequence clusters. Building on this, several equivalent formulations of this completeness notion—using geometric functionals and open covers—were established in \cite{Ghosh2025}.

Despite this progress, unlike the cases of cofinally complete and cofinally Bourbaki complete spaces, the function-theoretic characterization of cofinally Bourbaki quasi-complete metric spaces remains, to date, largely unexplored. In particular, while cofinally complete and cofinally Bourbaki complete spaces have been characterized via Lipschitz-type, CC-regular, and CBC-regular functions, no analogous framework has yet been established for the cofinally Bourbaki quasi-complete setting.

This raises several natural and fundamental questions regarding the structure of such spaces:
\begin{itemize}
    \item Can cofinally Bourbaki quasi-complete metric spaces be characterized by the behavior of locally Lipschitz functions?
    \item Is it possible to identify a class of mappings—intermediate between locally Lipschitz and Lipschitz—that precisely captures this completeness property?
    \item Can the cofinally Bourbaki quasi-completeness of a space be inferred from the behavior of certain classes of functions on cofinally Bourbaki quasi-Cauchy sequences?
    \item How do such function-theoretic characterizations extend to completions of metric spaces?
\end{itemize}

In this paper, we address these questions, to this end we introduce a new class of functions, termed \emph{strongly uniformly locally Lipschitz} (also referred to as \emph{uniformly locally chain-Lipschitz}) functions. A mapping $f : X \to Y$ between metric spaces is said to be strongly uniformly locally Lipschitz if there exists $\delta > 0$ such that for each $x \in X$, the restriction of $f$ to the $\delta$-chainable component $S^\infty_d(x, \delta)$ is Lipschitz. This class lies strictly between the classes of uniformly locally Lipschitz and Lipschitz mappings. Moreover, in chainable metric spaces—such as normed linear spaces—strongly uniformly locally Lipschitz functions coincide with classical Lipschitz functions.

Our main results are as follows. In Section~2, we show that a metric space $\langle X, d \rangle$ is cofinally Bourbaki quasi-complete if and only if every locally Lipschitz function is strongly uniformly locally Lipschitz, assuming $X$ is strongly uniformly locally bounded (see Theorem~\ref{uniformly_lipschitz_firsttheorem}). We also prove that the completion $\langle \widehat{X}, \widehat{d} \rangle$ is cofinally Bourbaki quasi-complete if and only if every Cauchy--Lipschitz function on $X$ is strongly uniformly locally Lipschitz (see Theorem~\ref{completeion_uniformly_lipschitz_theorem}). In addition, we provide an answer to the question: Under what conditions are nonvanishing real-valued strongly uniformly locally Lipschitz functions stable under reciprocation in strongly uniformly locally bounded metric spaces?

In Section~3, we investigate the behavior of continuous, Cauchy-continuous, CC-regular, and CBC-regular mappings on cofinally Bourbaki quasi-Cauchy sequences, and derive additional characterizations of cofinally Bourbaki quasi-complete metric spaces and their completions. In addition, we identify necessary and sufficient conditions under which cofinally complete and cofinally Bourbaki complete spaces exhibit this stronger form of completeness. Although these function classes have been fruitfully applied in the study of other generalized completeness properties, their relationship to cofinally Bourbaki quasi-Cauchy sequences proves to be more delicate. In particular, since every sequence in $\mathbb{R}$ is cofinally Bourbaki quasi-Cauchy, the preservation of such sequences by continuous real-valued mappings cannot, on its own, serve to characterize completeness. This subtlety highlights the need for more refined criteria and underscores the novelty of the results obtained in this section.

\section{Characterizations of Cofinally Bourbaki Quasi-Completeness via Lipschitz-Type Functions}

In this section, we investigate the extent to which Lipschitz-type conditions are imposed on real-valued functions can be used to characterize cofinally Bourbaki quasi-complete metric spaces. Our approach builds upon the classical theory of locally Lipschitz, uniformly locally Lipschitz, and Cauchy-Lipschitz functions as developed in \cite{BeerBook, Beer2015, Beer2016}. In this context, we also introduce the class of strongly uniformly locally bounded metric spaces, which play a central role in our main results.

We begin by recalling several standard notions from the Lipschitz function hierarchy.

\begin{definition}[\cite{BeerBook, Beer2015, Beer2016}]
Let $\langle X, d \rangle$ and $\langle Y, \rho \rangle$ be metric spaces, and let $f: X \to Y$ be a function.
\begin{enumerate}
    \item $f$ is said to be \emph{locally Lipschitz} if for each point $x \in X$, there exists $\delta_x > 0$ such that the restriction of $f$ to the ball $S_d(x, \delta_x)$ is Lipschitz. The Lipschitz constant and radius $\delta_x$ may vary with $x$.

    \item $f$ is said to be \emph{Cauchy-Lipschitz} if $f$ is Lipschitz on the range of every Cauchy sequence in $X$.

    \item $f$ is said to be \emph{uniformly locally Lipschitz} if there exists a uniform radius $\delta > 0$ such that for every $x \in X$, the restriction of $f$ to the ball $S_d(x, \delta)$ is Lipschitz.
\end{enumerate}
\end{definition}

The locally Lipschitz, Cauchy-Lipschitz and uniformly locally Lipschitz functions have been proven to be effective tools for characterizing several intermediate completeness properties, including UC-spaces and cofinally complete metric spaces. To extend this framework to the cofinally Bourbaki quasi-complete setting, we now introduce the following notion.

\begin{definition}
A metric space $\langle X, d \rangle$ is said to be \emph{strongly uniformly locally bounded} if there exists $\delta > 0$ such that for each $x \in X$, the $\delta$-chainable component $S^{\infty}_d(x, \delta)$ is a bounded subset of $X$.
\end{definition}

Every bounded metric space, as well as every strongly uniformly locally compact metric space (see \cite{Das2024} for the definition), is strongly uniformly locally bounded. However, the converse does not hold in general. For example, the space of positive integers $\mathbb{N}$, equipped with the subspace metric inherited from the real line, is strongly uniformly locally bounded, yet it is not bounded. On the other hand, the open interval $(0,1) \subseteq \mathbb{R}$, endowed with the standard Euclidean metric, is bounded but fails to be strongly uniformly locally compact.

We now arrive at the first main result of this section, which characterizes cofinally Bourbaki quasi-complete metric spaces in terms of Lipschitz-type mappings. The equivalences below show that cofinally Bourbaki quasi-completeness is precisely determined by the behavior of locally Lipschitz functions under the additional assumption that the space is strongly uniformly locally bounded.

\begin{theorem}\label{uniformly_lipschitz_firsttheorem}
Let $\langle X, d \rangle$ be a metric space. The following conditions are equivalent:
\begin{enumerate}
    \item $X$ is cofinally Bourbaki quasi-complete;
    \item $X$ is strongly uniformly locally bounded and every locally Lipschitz function from $X$ into a metric space $\langle Y, d \rangle$ is uniformly locally chain-Lipschitz;
    \item $X$ is strongly uniformly locally bounded and every real-valued locally Lipschitz function on $X$ is uniformly locally chain-Lipschitz;
    \item $X$ is strongly uniformly locally bounded and the real-valued strongly uniformly locally Lipschitz functions are uniformly dense in $C(X, \mathbb{R})$.
\end{enumerate}
\end{theorem}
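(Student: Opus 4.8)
The plan is to establish all four equivalences through the cycle $(1)\Rightarrow(2)\Rightarrow(3)\Rightarrow(1)$ together with $(1)\Leftrightarrow(4)$, noting that $(2)\Rightarrow(3)$ is immediate upon specializing to $Y=\mathbb{R}$ (and recalling that ``uniformly locally chain-Lipschitz'' and ``strongly uniformly locally Lipschitz'' are the same notion). The first task, common to the nontrivial implications, is to show that $(1)$ forces $X$ to be strongly uniformly locally bounded. I would argue the contrapositive: if for every $\delta>0$ some $\delta$-chainable component is unbounded, pick for each $k$ a point $p_k$ with $S^{\infty}_d(p_k,1/k)$ unbounded, partition $\mathbb{N}$ into infinitely many infinite blocks $M_k$, and for $n\in M_k$ choose $x_n\in S^{\infty}_d(p_k,1/k)$ with $d(o,x_n)>n$ for a fixed basepoint $o$ (possible since an infinite-diameter set has points arbitrarily far from $o$). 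Then $d(o,x_n)\to\infty$ kills every cluster point, while for each $\varepsilon$ one takes $k$ with $1/k<\varepsilon$ and observes that the infinitely many terms indexed by $M_k$ all lie in $S^{\infty}_d(p_k,1/k)\subseteq S^{\infty}_d(p_k,\varepsilon)$ and are hence pairwise $\varepsilon$-chain-joinable; so $\langle x_n\rangle$ is cofinally Bourbaki quasi-Cauchy, contradicting $(1)$.

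For $(3)\Rightarrow(1)$, which I expect to be the cleanest step, I would again use the contrapositive. Assume $X$ is strongly uniformly locally bounded, fix $\delta_0$ with every $\delta_0$-component of diameter at most some $D$, and take a cofinally Bourbaki quasi-Cauchy sequence $\langle x_n\rangle$ with no cluster point, whose range is therefore closed and discrete. For each $k$ with $1/k\le\delta_0$ the defining infinite set $N_{1/k}$ provides two distinct terms $u_k,v_k$ lying in a common $(1/k)$-chainable component of diameter $\le D$; choosing the $u_k,v_k$ distinct across $k$, I define $f$ as a locally finite sum of Lipschitz tent functions with disjoint shrinking supports around the isolated points $u_k$, of height $h_k>kD$, so that $f(v_k)=0$. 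This $f$ is locally Lipschitz (near any point it equals a single tent), yet $|f(u_k)-f(v_k)|/d(u_k,v_k)>k$ with $u_k,v_k\in S^{\infty}_d(u_k,1/k)\subseteq S^{\infty}_d(u_k,\delta)$ for any prescribed $\delta>1/k$, so $f$ is real-valued locally Lipschitz but not uniformly locally chain-Lipschitz, negating $(3)$.

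The implication $(1)\Rightarrow(2)$ is where I expect the real difficulty. Given $f\colon X\to\langle Y,\rho\rangle$ locally Lipschitz but not uniformly locally chain-Lipschitz, choose for each $k$ a point $w_k$ with $f$ non-Lipschitz on $S^{\infty}_d(w_k,1/k)$. If $\langle w_k\rangle$ happens to be cofinally Bourbaki quasi-Cauchy, then by $(1)$ it clusters at some $x^{\ast}$, and since $d(w_{k_j},x^{\ast})<\varepsilon$ puts $w_{k_j}$ in the same $\varepsilon$-component as $x^{\ast}$, nesting forces $f$ to be non-Lipschitz on $S^{\infty}_d(x^{\ast},\varepsilon)$ for every $\varepsilon$; inside these nested components I then select witnesses and argue they cannot cluster. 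The genuine obstacle is extracting, in all cases, a single sequence that is cofinally Bourbaki quasi-Cauchy at every scale while having no cluster point: failure of Lipschitzness on a bounded component splits into the easy case of arbitrarily close pairs with ratio $\to\infty$ (these cannot cluster, since local Lipschitzness at a putative limit would cap the ratio) and the delicate case where $f$ is merely unbounded on a bounded component with bounded local ratios, i.e. oscillation inflated along long chains toward an ``end''. Here I expect to need completeness of $X$ (which follows from $(1)$, as Cauchy sequences are cofinally Bourbaki quasi-Cauchy) to rule out clustering of a divergent-value chain, and to interleave such witnesses across components so that each scale is served by a single component. This chain-inflation phenomenon is precisely why the property is strictly finer than ordinary uniform local Lipschitzness, and it is the crux of the whole theorem.

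Finally, for $(1)\Leftrightarrow(4)$: the forward direction follows since $(1)$ yields $(3)$, so the real-valued locally Lipschitz and strongly uniformly locally Lipschitz classes coincide, and locally Lipschitz functions are uniformly dense in $C(X,\mathbb{R})$ (via a Lipschitz partition of unity on the metric, hence paracompact, space $X$), whence the strongly uniformly locally Lipschitz functions are dense as well. For $(4)\Rightarrow(1)$ I would argue the contrapositive, reusing the construction of the previous paragraph to build a continuous $g$ exhibiting large oscillation on arbitrarily fine chainable components; any strongly uniformly locally Lipschitz $h$ with $\|h-g\|_{\infty}<\tfrac12$ would be Lipschitz on the relevant $\delta$-component and could not reproduce that oscillation, so no such $h$ exists and density fails. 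The same non-clustering difficulty identified in $(1)\Rightarrow(2)$ reappears here and is handled by the same device.
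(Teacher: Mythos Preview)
Your argument for $(3)\Rightarrow(1)$ has a genuine gap. The definition of strongly uniformly locally Lipschitz requires only that \emph{some} $\delta>0$ exist so that $f|_{S^{\infty}_d(x,\delta)}$ is Lipschitz for each $x$, with the Lipschitz constant allowed to depend on $x$. Your tent-function construction produces, for a fixed $\delta$, pairs $u_k,v_k$ in $S^{\infty}_d(u_k,\delta)$ with ratio exceeding $k$, but these lie in \emph{different} $\delta$-components as $k$ varies. On any single component only finitely many tents are supported (the supports are disjoint and shrink, and the component is bounded), so $f$ \emph{is} Lipschitz there, just with a large constant. You have shown the constants are unbounded across components, which does not negate the definition. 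To make this work you would need infinitely many $u_k$ to land in a common $\delta$-component, and nothing in a bare cofinally Bourbaki quasi-Cauchy sequence guarantees that.

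Your $(1)\Rightarrow(2)$ is, as you acknowledge, incomplete; the ``chain-inflation'' case you describe is exactly the heart of the matter and you have not resolved it. The paper bypasses both difficulties by invoking a structural characterization proved elsewhere (\cite[Theorem~4.1]{Ghosh2025}): $X$ is cofinally Bourbaki quasi-complete if and only if there exists $\delta>0$ such that every $\delta$-chainable component is covered by finitely many balls of any prescribed radius (a uniform total-boundedness-on-components condition), equivalently if and only if every real-valued continuous function is uniformly component-bounded. With this in hand, $(1)\Rightarrow(2)$ becomes a direct argument: cofinal completeness makes $f$ uniformly locally Lipschitz, and the finite ball-cover of each component lets one split into a near/far dichotomy to bound the Lipschitz ratio on the whole component. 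The paper then goes $(3)\Rightarrow(4)$ via uniform density of locally Lipschitz functions in $C(X,\mathbb{R})$, and $(4)\Rightarrow(1)$ by showing every continuous function is uniformly component-bounded. Your sequential/contrapositive route would need an analogue of that covering characterization to close the gaps.
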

\begin{proof}
$(1) \Rightarrow (2)$. Let \( f: X \to Y \) be a locally Lipschitz function. Since \( X \) is cofinally Bourbaki quasi-complete, it follows that \( X \) is cofinally complete. Hence, by \cite[Theorem 30.10]{BeerBook}, the function \( f \) is uniformly locally Lipschitz. Therefore, there exists \( \varepsilon > 0 \) such that for each \( x \in X \), the restriction \( f|_{S_d(x,\varepsilon)} \) is Lipschitz. In particular, \( f \) is bounded on \( S_d(x, \varepsilon) \) for every \( x \in X \).

By \cite[Theorem 4.1]{Ghosh2025}, there exists \( \delta > 0 \) such that for each \( x \in X \), there exist points \( p_1, \dots, p_k \in X \) satisfying
\(
S_d^\infty(x, \delta) \subseteq \bigcup_{i=1}^k S_d\left(p_i, \frac{\varepsilon}{4} \right).
\)
This implies that \( X \) is strongly uniformly locally bounded. Let \( u, v \in S_d^\infty(x, \delta) \). If \( d(u, v) < \frac{\varepsilon}{4} \) and \( u \in S_d(p_i, \frac{\varepsilon}{4}) \) for some \( i \in \{1, \dots, k\} \), then
\(
d(v, p_i) \leq d(v, u) + d(u, p_i) < \varepsilon.
\)
Thus, \( u, v \in S_d(p_i, \varepsilon) \), and since \( f \) is Lipschitz on \( S_d(p_i, \varepsilon) \), it follows that
\(
\rho(f(u), f(v)) \leq L_i d(u, v),
\)
where \( L_i \) is the Lipschitz constant on \( S_d(p_i, \varepsilon) \).

Now suppose \( d(u, v) \geq \frac{\varepsilon}{4} \). Since \( f \) is bounded on \( \bigcup_{i=1}^k S_d(p_i, \varepsilon) \), there exists \( M > 0 \) and a fixed point \( y_0 \in Y \) such that \( f(x) \in S_\rho(y_0, M) \) for all \( x \in \bigcup_{i=1}^k S_d(p_i, \varepsilon) \). Therefore,
\[
\frac{\rho(f(u), f(v))}{d(u, v)} \leq \frac{2M}{d(u, v)} \leq \frac{8M}{\varepsilon}.
\]

Combining both cases, for any \( u, v \in S_d^\infty(x, \delta) \), we obtain
\[
\rho(f(u), f(v)) \leq C d(u, v),
\]
where \( C = \max\{L_i : i = 1, \dots, k\} + \frac{8M}{\varepsilon} \). Hence, \( f \) is Lipschitz on \( S_d^\infty(x, \delta) \), and since this holds uniformly over \( x \in X \), we conclude that \( f \) is uniformly locally chain-Lipschitz on \( X \).

$(2)\Rightarrow(3)$ It is trivial.

$(3)\Rightarrow(4)$ Condition (4) is an immediate consequence of \cite[Theorem 26.3]{BeerBook} since by condition (3), real-valued locally Lipschitz functions on $X$ are uniformly locally chain-Lipschitz.

$(4)\Rightarrow(1)$ Let $f: X\to\mathbb R$ be continuous. By \cite[Theorem 4.1]{Ghosh2025}, it is sufficient to show that $f$ is uniformly component-bounded. By condition (4), we can uniformly approximate each real-valued continuous function by strongly uniformly locally Lipschitz functions. Therefore, we can choose \( \varepsilon > 0 \) such that for each \( x \in X \), the restriction \( f|_{S_d^{\infty}(x,\varepsilon)} \) is Lipschitz and $S_d^{\infty}(x,\varepsilon)$ is bounded. Easily, $f(S_d^{\infty}(x,\varepsilon))$ is bounded for each $x\in X$.
\end{proof}

 The result below provides a characterization of the cofinally Bourbaki quasi-completeness of the completion of a metric space in terms of the behavior of Cauchy-Lipschitz functions defined on the original space. As in the preceding theorem, the characterization is obtained under the assumption that the space is strongly uniformly locally bounded, a condition that ensures sufficient control over the chainable structure of neighborhoods. In the course of this analysis, we also identify when the class of Cauchy-Lipschitz functions coincides with that of strongly uniformly locally Lipschitz functions.

\begin{theorem}\label{completeion_uniformly_lipschitz_theorem}
Let \( \langle X, d \rangle \) be a metric space. The following conditions are equivalent:
\begin{enumerate}
    \item The completion \( \langle \widehat{X}, \widehat{d} \rangle \) of \( \langle X, d \rangle \) is cofinally Bourbaki quasi-complete;
    \item $X$ is strongly uniformly locally bounded in itself and every Cauchy-Lipschitz function from $X$ into a metric space \( \langle Y, \rho \rangle \) is uniformly locally chain-Lipschitz;
    \item $X$ is strongly uniformly locally bounded in itself and every real-valued Cauchy-Lipschitz function on \( X \) is uniformly locally chain-Lipschitz;
    \item $X$ is strongly uniformly locally bounded in itself and the real-valued strongly uniformly locally Lipschitz functions on \( X \) are uniformly dense in \( CC(X, \mathbb{R}) \), the space of real-valued Cauchy-continuous functions on \( X \).
\end{enumerate}
\end{theorem}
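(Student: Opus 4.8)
The plan is to \emph{reduce} the statement to Theorem~\ref{uniformly_lipschitz_firsttheorem} applied to the completion $\langle \widehat X, \widehat d\rangle$. Observe that condition (1) is exactly condition (1) of Theorem~\ref{uniformly_lipschitz_firsttheorem} for the space $\widehat X$, which that theorem already equates with ``$\widehat X$ is strongly uniformly locally bounded and every locally Lipschitz $\widehat f:\widehat X\to Y$ is uniformly locally chain-Lipschitz.'' Thus the whole task is to match the hypotheses on $X$ in (2)--(4) with the corresponding hypotheses on $\widehat X$, and the entire proof rests on a handful of transfer principles across the dense embedding $X\hookrightarrow\widehat X$.

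The technical engine is the following chain estimate, which I would prove first: since $X$ is dense in $\widehat X$, for $x_0\in X$, $\hat x\in\widehat X$ with $\widehat d(\hat x,x_0)<\eta\le\delta'$ one has $S^{\infty}_{\widehat d}(\hat x,\delta')\cap X\subseteq S^{\infty}_{d}(x_0,\delta'+2\eta)$ (replace each interior vertex of a $\delta'$-chain in $\widehat X$ by a nearby point of $X$, enlarging the step by at most $2\eta$) and, conversely, $S^{\infty}_{\widehat d}(\hat x,\delta')\subseteq\overline{S^{\infty}_{\widehat d}(\hat x,\delta')\cap X}$ (approximate a terminal vertex by a point of $X$ within a single step). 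Choosing $\delta'=\delta/2$, $\eta=\delta/4$ makes $\delta'+2\eta=\delta$. From the first inclusion it is immediate that $\widehat X$ is strongly uniformly locally bounded if and only if $X$ is strongly uniformly locally bounded in itself, which settles the first clause of each condition. I would also record the standard correspondences that a map $f:X\to Y$ is Cauchy-Lipschitz precisely when it extends to a locally Lipschitz map $\widehat f:\widehat X\to\widehat Y$, and that $g:X\to\mathbb R$ is Cauchy-continuous precisely when it extends to a continuous $\widehat g:\widehat X\to\mathbb R$ (cf.\ \cite{Beer2016}); the nontrivial direction of the former is seen by contradiction, building a single Cauchy sequence in $X$ out of would-be witnesses to a failure of local Lipschitzness of $\widehat f$.

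With these in hand the implications follow the cycle of Theorem~\ref{uniformly_lipschitz_firsttheorem}. For $(1)\Rightarrow(2)$, given Cauchy-Lipschitz $f$, extend to locally Lipschitz $\widehat f$, apply Theorem~\ref{uniformly_lipschitz_firsttheorem} to the cofinally Bourbaki quasi-complete space $\widehat X$ to conclude $\widehat f$ is uniformly locally chain-Lipschitz on $\widehat X$ with some uniform radius $\delta$, and then restrict: for $x\in X$, $S^{\infty}_d(x,\delta)\subseteq S^{\infty}_{\widehat d}(x,\delta)$, so $f$ is Lipschitz on $S^{\infty}_d(x,\delta)$. The implication $(2)\Rightarrow(3)$ is trivial, and $(3)\Rightarrow(4)$ follows once one invokes the density of real-valued Cauchy-Lipschitz functions in $CC(X,\mathbb R)$ (the Cauchy-continuous analogue of \cite[Theorem 26.3]{BeerBook}): by (3) these already lie among the strongly uniformly locally Lipschitz functions, so the latter are dense. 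For $(4)\Rightarrow(1)$ I would verify condition (1) through Theorem~\ref{uniformly_lipschitz_firsttheorem} and \cite[Theorem 4.1]{Ghosh2025} applied to $\widehat X$: take continuous $g:\widehat X\to\mathbb R$, restrict it to the Cauchy-continuous $g|_X$, approximate $g|_X$ uniformly by a strongly uniformly locally Lipschitz $h$, whence $g|_X$ is bounded on each $S^{\infty}_d(x,\varepsilon)$ (for a uniform $\varepsilon$, using strong uniform local boundedness), and finally transport this component-boundedness to $\widehat X$: for $\hat x\in\widehat X$ the chain estimate gives $S^{\infty}_{\widehat d}(\hat x,\varepsilon')\cap X\subseteq S^{\infty}_d(x_0,\varepsilon)$ for a nearby $x_0\in X$, and continuity of $g$ together with $S^{\infty}_{\widehat d}(\hat x,\varepsilon')\subseteq\overline{S^{\infty}_{\widehat d}(\hat x,\varepsilon')\cap X}$ upgrades boundedness on the $X$-part to boundedness on the full component.

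The main obstacle I anticipate is bookkeeping the step-size slack in the chain estimate so that a \emph{single uniform} radius survives passage to and from the completion: each transfer costs a factor in the step size, and one must arrange (as in the choice $\delta'=\delta/2$, $\eta=\delta/4$) that the relevant chainable components of $\widehat X$ sit inside a single chainable component of $X$ of the prescribed larger radius while still being dense in it. A secondary delicate point is the rigorous justification that a Cauchy-Lipschitz function extends to a locally Lipschitz map on $\widehat X$ (and conversely), since Lipschitzness must be recovered at limit points of $X$ lying outside $X$; here the interleaving-sequence argument must be set up carefully so that density perturbations do not destroy the blow-up of difference quotients.
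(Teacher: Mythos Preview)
Your proposal is correct and follows essentially the same architecture as the paper's proof: the same cycle $(1)\Rightarrow(2)\Rightarrow(3)\Rightarrow(4)\Rightarrow(1)$, the same chain-approximation estimate transferring $\delta'$-chainable components of $\widehat X$ into $\delta$-chainable components of $X$ (the paper uses $\delta'=\delta/3$ where you use $\delta/2$ and $\eta=\delta/4$), the same appeal to density of Cauchy-Lipschitz functions in $CC(X,\mathbb R)$ for $(3)\Rightarrow(4)$, and the same uniform-component-boundedness criterion from \cite{Ghosh2025} for $(4)\Rightarrow(1)$.

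The only organizational difference worth noting is in $(1)\Rightarrow(2)$. The paper first invokes cofinal completeness of $\widehat X$ to upgrade the Cauchy-Lipschitz $f$ to uniformly locally Lipschitz on $X$ (via \cite[Proposition~30.11]{BeerBook}), then extends this to a uniformly locally Lipschitz $\widehat f$ on $\widehat X$ (via \cite[Proposition~27.6]{BeerBook}), and finally reruns the argument of Theorem~\ref{uniformly_lipschitz_firsttheorem}. Your route---proving once and for all that Cauchy-Lipschitz on $X$ is equivalent to admitting a locally Lipschitz extension to $\widehat X$, then citing Theorem~\ref{uniformly_lipschitz_firsttheorem} as a black box---is slightly more self-contained and avoids the detour through uniform local Lipschitzness, at the cost of the interleaving argument you flag. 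Both are valid; yours is marginally cleaner.
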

\begin{proof}
\((1) \Rightarrow (2)\) Since \( \langle \widehat{X}, \widehat{d} \rangle \) is cofinally Bourbaki quasi-complete, there exists $\delta>0$ such that $S_{\widehat{d}}^{\infty}(x, \delta)$ is bounded for all $x\in \widehat{X}$. Since $S_d^{\infty}(x, \delta)\subseteq S_{\widehat{d}}^{\infty}(x, \delta)$, $X$ is strongly uniformly locally bounded in itself. Let \( f: X \to Y \) be a Cauchy-Lipschitz function, where \( \langle Y, \rho \rangle \) is a metric space. 

Since \( \langle \widehat{X}, \widehat{d} \rangle \) is cofinally complete, by \cite[Proposition 30.11]{BeerBook}, $f$ is uniformly locally Lipschitz function on $X$. Now, we can view \( f \) as a function into the completion \( \langle \widehat{Y}, \widehat{\rho} \rangle \). By \cite[Proposition 27.6]{BeerBook}, \( f \) has a uniformly locally Lipschitz extension \( \widehat{f}: \widehat{X} \to \widehat{Y} \). Now, similar to the proof of \((1) \Rightarrow (2)\) of Theorem \ref{uniformly_lipschitz_firsttheorem}, one can prove that $\widehat{f}$ is uniformly locally chain-Lipschitz on $\widehat{X}$. Hence, the restriction \( f = \widehat{f}|_X \) is uniformly locally chain-Lipschitz on \( X \).


\((2) \Rightarrow (3)\) It is obvious.

\((3) \Rightarrow (4)\) Condition (4) is an immediate consequence of condition (3) and \cite[Theorem 26.8]{BeerBook}.

\((4) \Rightarrow (1)\)  Let \( \widehat{f} \in C(\widehat{X}, \mathbb{R}) \) be given. By condition (7) of \cite[Theorem 4.1]{Ghosh2025}, it suffices to show that \( \widehat{f} \) is uniformly locally component bounded on $\widehat{X}$. 

Let \( f := \widehat{f}|_X \). Since \( \langle \widehat{X}, \widehat{d} \rangle \) is a complete metric space, the function \( \widehat{f} \) is Cauchy continuous, and thus \( f \in CC(X, \mathbb{R}) \). By assumption (4), there exists a uniformly locally chain-Lipschitz function \( g: X \to \mathbb{R} \) such that \(\operatorname{sup}_{x\in X} |f(x) - g(x)|< \infty \).

Choose \( \delta > 0 \) such that \( g \) is Lipschitz on \( S_d^{\infty}(x, \delta) \) and \( S_d^{\infty}(x, \delta) \) is bounded in $X$ for each $x\in X$. Fix \( \widehat{x} \in \widehat{X} \). Then there exists \( x \in X \) such that $\widehat{d}(x, \widehat{x})<\frac{\delta}{3}$. We show that
\(
S_{\widehat{d}}^{\infty}\left( \widehat{x}, \frac{\delta}{3} \right) \subseteq \operatorname{cl}_{\widehat{X}}S_d^{\infty}(x, \delta).
\)
Let $\widehat{y}\in S_{\widehat{d}}^{\infty}\left( \widehat{x}, \frac{\delta}{3} \right)$.Then there exists a finite $\frac{\delta}{3}$-chain $\widehat{y}=\widehat{u}_0\leq \widehat{u}_1\leq\dots\leq\widehat{u}_m=\widehat{x}$ in $\widehat{X}$. Let $\varepsilon>0$ be given. Then there exists $u_j\in X$ such that $\widehat{d}(u_j, \widehat{u}_j)\leq\operatorname{min}\{\frac{\delta}{3},\varepsilon\}$ for each $j=0,\dots,m-1$ and $u_m=x$. Note that for $j=0,\dots,m-1$, $$d(u_j, u_{j+1})=\widehat{d}(u_j, u_{j+1})\leq\widehat{d}(u_j, \widehat{u}_{j+1})+\widehat{d}(\widehat{u}_j, \widehat{u}_{j+1})+\widehat{d}(u_j, \widehat{u}_j)<\delta.$$ Thus $u_0\leq u_1\leq\dots\leq u_m=x$ is a $\delta$-chain in $X$. Clearly, $\widehat{y}\in \operatorname{cl}_{\widehat{X}}S_d^{\infty}(x, \delta)$. 

Since \( g \) is bounded on \( S^{\infty}_d(x, \delta) \), \( f \) is bounded on \( S^{\infty}_d(x, \delta) \). Therefore, \( \widehat{f} \) is bounded on \( S_{\widehat{d}}^{\infty}\left( \widehat{x}, \frac{\delta}{3} \right)\) as \( \widehat{f} \) is continuous. Since \( \widehat{x} \in \widehat{X} \) is arbitrary, we conclude that \( \widehat{f} \) is uniformly locally chain bounded on \( \widehat{X} \).
\end{proof}

The last result also yields a condition under which every Cauchy-continuous real-valued function on $\langle X, d\rangle$ can be uniformly approximated by uniformly locally chain-Lipschitz functions.

\begin{proposition}
    Let $\langle x_n\rangle$ be a cofinally Bourbaki quasi-Cauchy sequence of distinct terms in a metric space $\langle X, d \rangle$. Then there exists a pairwise disjoint family $\{M_j : j \in \mathbb{N}\}$ of infinite subsets of $\mathbb{N}$ such that if $i, \ell \in M_j$, then $x_i$ and $x_\ell$ can be joined by a $\frac 1 j$-chain.
\end{proposition}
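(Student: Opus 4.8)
The plan is to extract, for each $j \in \mathbb{N}$, an infinite witness set directly from the defining property of a cofinally Bourbaki quasi-Cauchy sequence, and then to thin these witness sets down to a pairwise disjoint family while preserving the chain property. Applying the definition with $\varepsilon = \frac{1}{j}$ produces, for each $j$, an infinite set $N_j \subseteq \mathbb{N}$ such that for all $i, \ell \in N_j$ the points $x_i$ and $x_\ell$ can be joined by a $\frac{1}{j}$-chain. The first observation I would record is that this property is hereditary: if $M_j \subseteq N_j$, then any two indices of $M_j$ still lie in $N_j$, and hence the corresponding points are joined by a $\frac{1}{j}$-chain. Consequently, it suffices to produce pairwise disjoint infinite sets $M_j$ with $M_j \subseteq N_j$ for every $j$.

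Reducing the problem to this transversal statement, I would construct the family $\{M_j\}$ by a single diagonal selection of distinct representatives. Fix a bijection $s \mapsto (j_s, t_s)$ of $\mathbb{N}$ onto $\mathbb{N} \times \mathbb{N}$, and build a sequence of integers stage by stage: at stage $s$, having committed only finitely many integers so far, choose $a_{(j_s,t_s)} \in N_{j_s}$ distinct from all previously chosen values. This choice is always possible because each $N_{j_s}$ is infinite while the set of committed integers is finite at every finite stage. Setting $M_j := \{a_{(j,t)} : t \in \mathbb{N}\}$, each $M_j$ is infinite (it receives one new representative for each of the infinitely many values of $t$), the sets $M_j$ are pairwise disjoint because all chosen representatives are distinct, and $M_j \subseteq N_j$ by construction. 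Combining this with the hereditary property from the first step yields the desired conclusion: whenever $i, \ell \in M_j$, the points $x_i$ and $x_\ell$ are joined by a $\frac{1}{j}$-chain.

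The step I expect to be the main obstacle---or at least the only genuinely delicate point---is the disjointification, since the witness sets $N_j$ obtained from the definition will in general overlap heavily and cannot be assumed to be disjoint or nested. The temptation to subtract previously used sets fails outright, because removing an infinite set from $N_j$ may leave only a finite remainder; the resolution is precisely the finite-stage greedy selection above, which commits only one integer at a time and therefore never exhausts any of the infinite sets $N_j$. The hypothesis that the terms $\langle x_n \rangle$ are distinct plays no role in the combinatorial construction itself; it only guarantees that the indices in each $M_j$ correspond to genuinely distinct points, so that the $\frac{1}{j}$-chains joining them are nontrivial.
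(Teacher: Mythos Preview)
Your argument is correct. Extracting the witness sets $N_j$ directly from the definition and then disjointifying by a greedy diagonal selection is a valid and clean route to the conclusion; the hereditary observation that the chain property passes to subsets of $N_j$ is exactly what makes the reduction to a pure transversal problem work, and your one-at-a-time selection correctly avoids the pitfall of trying to subtract infinite sets.

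The paper's proof is genuinely different and rather more elaborate. It splits into two cases according to whether $\langle x_n\rangle$ has a Bourbaki quasi-Cauchy subsequence. If it does, the index set of that subsequence is partitioned into infinitely many infinite pieces and each piece is truncated past the relevant threshold $m_j$. If it does not, the paper invokes the characterization of qC-precompactness from \cite{Adhikary2024} to build the $M_j$ inductively so that points indexed by $M_j$ are joined by $\frac{1}{j}$-chains but \emph{not} by $\varepsilon_j$-chains for some $\varepsilon_j<\frac{1}{j+1}$; this extra anti-chain property is what forces disjointness at the level of points. Your approach is considerably more elementary, needing neither the case split nor the notion of qC-precompactness, and proves exactly the stated proposition. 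The paper's second case yields a bit more structure (control on how small a chain can join the points), but that extra information is not required by the proposition and is not exploited later.
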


\begin{proof}    
Suppose that the sequence $\langle x_n \rangle$ has a Bourbaki quasi-Cauchy subsequence $\langle x_{n_k} \rangle$. Let $N_0 = \{n_k : k \in \mathbb{N}\}$. For each $j \in \mathbb{N}$, choose $m_j \in \mathbb{N}$ such that for all $i > \ell \geq m_j$ with $\{i, \ell\} \subseteq N_0$, the points $x_i$ and $x_\ell$ can be joined by a $\frac{1}{j}$-chain.

Partition $N_0$ into countably many infinite subsets $\{K_j : j \in \mathbb{N}\}$. Define
\[
M_j := \{n \in K_j : n \geq m_j\}.
\]
Then $\{M_j : j \in \mathbb{N}\}$ is a pairwise disjoint family of infinite subsets of $\mathbb{N}$ satisfying the desired conditions.

Now suppose that $\langle x_n \rangle$ has no Bourbaki quasi-Cauchy subsequence. Then, we can choose an infinite set $M_1 \subseteq \mathbb{N}$ such that for all $i, \ell \in M_1$, the points $x_i$ and $x_\ell$ can be joined by a $1$-chain. Since the sequence $\langle x_n \rangle$ has no Bourbaki quasi-Cauchy subsequence, the set $\{x_i : i \in M_1\}$ is not qC-precompact, by \cite[Theorem~2.2]{Adhikary2024}.

Hence, by passing to an infinite subset of $M_1$, there exists $\varepsilon_1 < \frac{1}{2}$ such that for all $i, \ell \in M_1$, the points $x_i$ and $x_\ell$ can be joined by a $1$-chain but not by any $\varepsilon_1$-chain.

Next, choose an infinite set $M_2 \subseteq \mathbb{N}$ such that for all $i, \ell \in M_2$, the points $x_i$ and $x_\ell$ can be joined by an $\varepsilon_1$-chain. By construction, the sets $\{x_i : i \in M_1\}$ and $\{x_i : i \in M_2\}$ have at most one common point.

Again, since the set $\{x_i : i \in M_2\}$ is not qC-precompact, we may pass to an infinite subset of $M_2$ and find $\varepsilon_2 < \frac{1}{3}$ such that for all $i, \ell \in M_2$, the points $x_i$ and $x_\ell$ can be joined by a $\frac{1}{2}$-chain but not by any $\varepsilon_2$-chain.

Continuing inductively in this manner, we construct a sequence of infinite subsets $\{M_j\}_{j \in \mathbb{N}}$ of $\mathbb{N}$ such that for each $j \in \mathbb{N}$:
\begin{itemize}
    \item The sets $\{x_i : i \in M_j\}$ are pairwise disjoint across $j$.
    \item There exists $\varepsilon_j < \frac{1}{j+1}$ such that for all $i, \ell \in M_j$, the points $x_i$ and $x_\ell$ can be joined by a $\frac{1}{j}$-chain but not by any $\varepsilon_j$-chain.
\end{itemize}
This completes the construction of the desired family $\{M_j : j \in \mathbb{N}\}$.
\end{proof}

Our next result addresses the following question: Under what conditions are nonvanishing real-valued strongly uniformly locally Lipschitz functions stable under reciprocation in strongly uniformly locally bounded metric spaces?

\begin{theorem}
Let $\langle X, d \rangle$ be a metric space. The following conditions are equivalent:
\begin{enumerate}
    \item The metric space $\langle X, d \rangle$ is cofinally Bourbaki quasi-complete.
    \item $X$ is strongly uniformly locally bounded and the reciprocal of every nonvanishing uniformly locally chain-Lipschitz real-valued function on $X$ is uniformly locally chain-Lipschitz.
    \item $X$ is strongly uniformly locally bounded and the reciprocal of every nonvanishing Lipschitz real-valued function on $X$ is uniformly locally chain-Lipschitz.
\end{enumerate}
\end{theorem}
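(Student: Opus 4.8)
The plan is to establish the cycle of implications $(1)\Rightarrow(2)\Rightarrow(3)\Rightarrow(1)$. The first two are short and lean entirely on Theorem~\ref{uniformly_lipschitz_firsttheorem}, while the last implication is the heart of the matter, and it is there that the Proposition preceding this theorem does the real work. For $(1)\Rightarrow(2)$, assuming $X$ is cofinally Bourbaki quasi-complete, it is strongly uniformly locally bounded by Theorem~\ref{uniformly_lipschitz_firsttheorem}. Given a nonvanishing uniformly locally chain-Lipschitz $f$, I would first note that $f$ is locally Lipschitz, since its restriction to each ball $S_d(x,\delta)\subseteq S_d^\infty(x,\delta)$ is Lipschitz. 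A routine continuity argument—near any point $x_0$ shrink a ball so that $f$ is Lipschitz there and $|f|\ge |f(x_0)|/2$—then shows $1/f$ is locally Lipschitz, whereupon condition (3) of Theorem~\ref{uniformly_lipschitz_firsttheorem} upgrades $1/f$ to uniformly locally chain-Lipschitz. For $(2)\Rightarrow(3)$ I would simply observe that a nonvanishing Lipschitz function is in particular a nonvanishing uniformly locally chain-Lipschitz function (a global Lipschitz constant works on every component $S_d^\infty(x,\delta)$), so (2) applies; the boundedness hypothesis is carried along verbatim.

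\textbf{The main implication $(3)\Rightarrow(1)$.} I would argue by contraposition. Suppose $X$ is strongly uniformly locally bounded, say $S_d^\infty(x,\delta_0)$ is bounded for every $x$, but $X$ is not cofinally Bourbaki quasi-complete; fix a cofinally Bourbaki quasi-Cauchy sequence $\langle x_n\rangle$ with no cluster point. One may assume its terms are distinct, since otherwise some value recurs infinitely often and is a cluster point (the reduction to a distinct-terms subsequence preserves the cofinally Bourbaki quasi-Cauchy property). Applying the Proposition yields a pairwise disjoint family $\{M_j:j\in\mathbb N\}$ of infinite subsets of $\mathbb N$ such that any two points indexed from a common $M_j$ are joined by a $\tfrac1j$-chain; hence the set $A_j=\{x_i:i\in M_j\}$ lies inside the single component $S_d^\infty(x_{i_0},\tfrac1j)$ for any $i_0\in M_j$.

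\textbf{Construction.} The governing heuristic is that, because the components $S_d^\infty(x,\delta)$ are bounded for $\delta\le\delta_0$, a Lipschitz $f$ has $1/f$ Lipschitz on such a component exactly when $f$ is bounded away from $0$ there; conversely $\inf_{S_d^\infty(x,\delta)}|f|=0$ forces $1/f$ to be unbounded, hence non-Lipschitz, on that bounded set. I would therefore pick for each $j$ distinct indices $i_{j,1}<i_{j,2}<\cdots$ in $M_j$ and set
\[
 f(x)=\min\Bigl\{1,\ \inf_{j,k}\bigl(\tfrac1k+d(x,x_{i_{j,k}})\bigr)\Bigr\}.
\]
As an infimum of $1$-Lipschitz functions capped at $1$, $f$ is $1$-Lipschitz, and $f(x_{i_{j,k}})\le\tfrac1k$ gives $\inf_{A_j}|f|=0$ for every $j$. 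Then, given any candidate radius $\delta^\ast$, I choose $j$ with $\tfrac1j\le\min\{\delta^\ast,\delta_0\}$; since $A_j\subseteq S_d^\infty(x_{i_0},\tfrac1j)$ is a subset of a bounded component on which $|f|$ has infimum $0$, the reciprocal $1/f$ is unbounded and so not Lipschitz on $S_d^\infty(x_{i_0},\tfrac1j)$. As uniform local chain-Lipschitzness at one radius entails it at all smaller radii, and $\delta^\ast$ was arbitrary, $1/f$ is not uniformly locally chain-Lipschitz, contradicting (3).

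\textbf{Main obstacle.} The delicate point is guaranteeing that $f$ is genuinely \emph{nonvanishing}, and this is exactly where the absence of a cluster point is essential. If $f(x)=0$, then $\tfrac1k+d(x,x_{i_{j,k}})$ would tend to $0$ along some sequence of index pairs, forcing distinct terms $x_{i_{j,k}}$ to converge to $x$—the strictly positive weights $\tfrac1k$ preclude a single index pair from realizing the infimum—so $x$ would be a cluster point of $\langle x_n\rangle$, which is impossible. Making this argument airtight, together with verifying that the distinct-terms reduction preserves the cofinally Bourbaki quasi-Cauchy property and that the $\delta^\ast$-bookkeeping is consistent, is the only real work; the remaining manipulations are routine.
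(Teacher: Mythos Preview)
Your argument is correct, and the overall architecture---the cycle $(1)\Rightarrow(2)\Rightarrow(3)\Rightarrow(1)$, the appeal to Theorem~\ref{uniformly_lipschitz_firsttheorem} for the first two implications, and a contrapositive construction via the Proposition for the last---matches the paper's exactly. The only substantive difference lies in the construction of the nonvanishing Lipschitz function in $(3)\Rightarrow(1)$.

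The paper defines $f$ only on the closed discrete set $\{x_n:n\in\mathbb N\}$ by $f(x_n)=\delta_n/n$, where $\delta_n=d(x_n,\{x_k:k\ne n\})$, checks that this is $2$-Lipschitz there, and then invokes a positive Lipschitz extension lemma (Beer--Garc\'ia-Lirola--Garrido, \cite[Lemma~2.2]{Beer2020}) to obtain a $2$-Lipschitz $g:X\to(0,\infty)$. Unboundedness of $1/g$ on each $\{x_n:n\in M_j\}$ then follows since $1/g(x_n)=n/\delta_n$ and $\delta_n$ is bounded on each such set. You instead write down a global formula $f(x)=\min\{1,\inf_{j,k}(\tfrac1k+d(x,x_{i_{j,k}}))\}$, which is $1$-Lipschitz as a capped infimum of $1$-Lipschitz functions, and verify nonvanishing directly from the no-cluster-point hypothesis. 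Your route is more self-contained, sidestepping the external extension result; the paper's route is slightly cleaner in that positivity of the extension comes for free from the cited lemma rather than from an ad hoc argument. Both constructions exploit the same mechanism: force $\inf|f|=0$ on each $A_j$ while keeping $A_j$ inside a bounded $\tfrac1j$-component, so that $1/f$ is unbounded on a bounded set and hence cannot be Lipschitz there.
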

\begin{proof}
$(1)\Rightarrow (2)$ Since every uniformly locally chain-Lipschitz function is locally Lipschitz, condition $(2)$ follows from Theorem~\ref{uniformly_lipschitz_firsttheorem} together with the fact that the reciprocal of every nonvanishing locally Lipschitz function on a metric space is itself locally Lipschitz.

$(2)\Rightarrow (3)$ It is obvious.

$(3)\Rightarrow (1)$ Suppose to the contrary that $X$ is not cofinally Bourbaki quasi-complete. Then there exists a cofinally Bourbaki quasi-Cauchy sequence $\langle x_n\rangle$ in $X$ that does not cluster. By passing to a subsequence, we may assume that $\langle x_n\rangle$ consists of distinct terms. Additionally, without loss of generality, let $\mathbb{N} = \bigcup_{j \in \mathbb{N}} M_j$ be a partition of the natural numbers into infinite subsets $M_j$ such that whenever $k\in M_j$, there exists $p_j\in X$ such that $x_k\in S_d^{\infty}(p_j,\frac 1 j)$ and $S_d^{\infty}(p_j,\frac 1 j)$ is bounded.

For each positive integer $n$, define
\(
\delta_n := d\left(x_n, \{x_k : k \neq n\}\right).
\)

Define a function $f : \{x_n : n \in \mathbb{N}\} \to (0, \infty)$ by
\[
f(x_n) := \frac{\delta_n}{n}.
\]
Then, for $n \neq k$, we compute:
\[
|f(x_n) - f(x_k)| \leq \frac{\delta_n}{n} + \frac{\delta_k}{k} \leq \delta_n + \delta_k \leq 2 d(x_n, x_k).
\]
Thus, $f$ is $2$-Lipschitz on its domain.

Since $\{x_n : n \in \mathbb{N}\}$ is closed in $X$, by \cite[Lemma 2.2]{Beer2020}, $f$ admits a $2$-Lipschitz extension $g : X \to (0,\infty)$ that is positive and defined on all of $X$.

However, for each $j \in \mathbb{N}$, the set $\{x_n : n \in M_j\}$ has a finite diameter, yet $1/g$ is unbounded on this set. Indeed, since $g(x_n) = \delta_n/n$, we have
\[
\frac{1}{g(x_n)} = \frac{n}{\delta_n} \to \infty \quad \text{as } n \to \infty \text{ in } M_j.
\]
Thus, while $1/g$ is locally Lipschitz, it fails to be strongly uniformly locally chain-Lipschitz, since on each set $\{x_n : n \in M_j\}$, it is unbounded. 
\end{proof}

We conclude this section with an elementary but useful observation concerning the algebraic structure of the space of strongly uniformly locally Lipschitz real-valued functions. The proof is left to the interesting reader.

\begin{theorem}
Let $\langle X, d \rangle$ be a metric space. Then the family of real-valued strongly uniformly locally Lipschitz functions on $X$ is closed under pointwise multiplication if and only if $X$ is strongly uniformly locally bounded.
\end{theorem}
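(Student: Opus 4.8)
The plan is to prove the two implications separately, the decisive tool in the harder direction being the single test function $h(x):=d(x,x_0)$ for a fixed basepoint $x_0\in X$.

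First I would treat the implication that strong uniform local boundedness forces closure under pointwise multiplication. Given two real-valued strongly uniformly locally Lipschitz functions $f$ and $g$, with witnessing radii $\delta_f$ and $\delta_g$, together with a radius $\delta_0$ witnessing strong uniform local boundedness, set $\delta:=\min\{\delta_f,\delta_g,\delta_0\}$. The elementary fact I would use is the monotonicity $S^\infty_d(x,\delta)\subseteq S^\infty_d(x,\delta')$ whenever $\delta\le\delta'$, which follows by induction on the recursive definition of $S^n_d$. Consequently, on each component $C:=S^\infty_d(x,\delta)$ the restrictions of $f$ and $g$ are Lipschitz (being restrictions to $C$ of the functions that are Lipschitz on the larger components $S^\infty_d(x,\delta_f)$ and $S^\infty_d(x,\delta_g)$) and $C$ is bounded. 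A Lipschitz function on a bounded set is bounded, so $f$ and $g$ are bounded on $C$; the estimate $|f(u)g(u)-f(v)g(v)|\le |f(u)|\,|g(u)-g(v)|+|g(v)|\,|f(u)-f(v)|$ then shows that $fg$ is Lipschitz on $C$. Since the same $\delta$ works for every $x$ (the Lipschitz constant of $fg$ may vary from component to component, which the definition permits), $fg$ is strongly uniformly locally Lipschitz.

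For the converse I would argue that closure under multiplication forces strong uniform local boundedness. Fix $x_0\in X$ and put $h(x):=d(x,x_0)$, which is $1$-Lipschitz on all of $X$ and hence strongly uniformly locally Lipschitz for every radius. By hypothesis $h^2=h\cdot h$ is strongly uniformly locally Lipschitz, so there is a radius $\delta>0$ such that $h^2$ is Lipschitz on every $\delta$-chainable component. I claim this $\delta$ witnesses strong uniform local boundedness. Fix any component $C=S^\infty_d(x,\delta)$, write $c:=h(x)=d(x,x_0)$, and let $K$ be a Lipschitz constant for $h^2$ on $C$. For every $u\in C$ the triangle inequality gives $d(u,x)\le d(u,x_0)+d(x_0,x)=h(u)+c$, and factoring $h^2(u)-h^2(x)=(h(u)-c)(h(u)+c)$ yields
\[
|h(u)-c|\,(h(u)+c)=|h^2(u)-h^2(x)|\le K\,d(u,x)\le K\,(h(u)+c).
\]
Dividing by $h(u)+c$ (the degenerate case $h(u)=c=0$ being trivial) gives $h(u)\le c+K$, so $C\subseteq\{u:d(u,x_0)\le c+K\}$ is bounded. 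Hence every $\delta$-chainable component is bounded, and $X$ is strongly uniformly locally bounded.

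The step I expect to be the real crux is the converse, and specifically the passage from ``$h^2$ is Lipschitz on $C$'' to ``$C$ is bounded.'' A naive attempt that only controls $h^2$ across consecutive links of a $\delta$-chain does not yield boundedness, since $h$ could in principle grow while increasing arbitrarily slowly per step. The decisive move is to compare each point of $C$ directly to the fixed basepoint $x$ of the component and to play the factorization of $h^2(u)-h^2(x)$ against the triangle-inequality bound $d(u,x)\le h(u)+c$; this cancels the large factor $h(u)+c$ and leaves a uniform bound on $h$ over $C$. Once this estimate is isolated, both implications are short.
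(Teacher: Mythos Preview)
The paper does not actually prove this theorem: it states the result and writes ``The proof is left to the interesting reader.'' So there is no authors' argument to compare against.

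Your proof is correct in both directions. The forward direction is the routine one: choosing $\delta=\min\{\delta_f,\delta_g,\delta_0\}$ and using monotonicity of chainable components, each $S^\infty_d(x,\delta)$ is bounded and carries Lipschitz restrictions of $f$ and $g$; boundedness of Lipschitz functions on bounded sets together with the product estimate gives the claim. For the converse, the choice $h(x)=d(x,x_0)$ is exactly the right test function, and the key computation---factoring $h^2(u)-h^2(x)=(h(u)-c)(h(u)+c)$ and bounding $d(u,x)\le h(u)+c$ via the triangle inequality so that the large factor cancels---cleanly yields $h(u)\le c+K$ on each component. Since the definition of strong uniform local boundedness only requires a single $\delta$ (the bound on each component may depend on the component, just as the Lipschitz constant $K$ and the value $c$ do), this is precisely what is needed.
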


\section{Characterizations of Cofinally Bourbaki Quasi-Completeness via Continuous-Type Functions}

In this section, we present several characterizations of cofinally Bourbaki quasi-complete metric spaces based on the behavior of real-valued functions acting on cofinally Bourbaki quasi-Cauchy sequences. In particular, the result below parallels classical stability results for function classes under reciprocal operations.

\begin{theorem}\label{reciprocal}
Let $\langle X, d\rangle$ be a metric space. Then the following statements are equivalent:
\begin{enumerate}
    \item $X$ is cofinally Bourbaki quasi-complete;
    \item If $f : X \to \mathbb{R}$ is a continuous function that maps every cofinally Bourbaki quasi-Cauchy sequence in $X$ to a cofinally Bourbaki-Cauchy sequence in $\mathbb{R}$, and if $f$ is nowhere zero, then the reciprocal function $\frac{1}{f}$ is also continuous and maps cofinally Bourbaki quasi-Cauchy sequences in $X$ to cofinally Bourbaki-Cauchy sequences in $\mathbb{R}$.
\end{enumerate}
\end{theorem}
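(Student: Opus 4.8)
The plan is to prove the two implications separately, after recording one elementary fact about the target notion in $\mathbb{R}$. Since for fixed $\varepsilon$ the chainable ball $S^m_d(p,\varepsilon)$ in $\mathbb{R}$ is just the bounded interval $(p-m\varepsilon,\,p+m\varepsilon)$, a sequence $\langle y_n\rangle$ in $\mathbb{R}$ is cofinally Bourbaki--Cauchy if and only if it has a bounded subsequence: the defining condition only asks for an infinite index set on which the sequence is bounded, and any such bounded piece fits into some interval by enlarging $m$. Equivalently, $\langle y_n\rangle$ \emph{fails} to be cofinally Bourbaki--Cauchy exactly when $|y_n|\to\infty$. This reformulation is what I would use throughout.

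For $(1)\Rightarrow(2)$ I would observe that, under cofinally Bourbaki quasi-completeness, \emph{every} continuous real-valued map sends cofinally Bourbaki quasi-Cauchy sequences to cofinally Bourbaki--Cauchy sequences. Indeed, if $\langle z_n\rangle$ is cofinally Bourbaki quasi-Cauchy then it clusters, so some subsequence converges to a point $z^\ast$; continuity then makes the corresponding image subsequence convergent, hence bounded, so the image is cofinally Bourbaki--Cauchy. Applying this to $g=1/f$, which is continuous precisely because $f$ is continuous and nowhere zero, gives the conclusion of $(2)$ at once. Note that the preservation hypothesis on $f$ is not even needed here; only the well-definedness and continuity of $1/f$ are used.

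For $(2)\Rightarrow(1)$ I would argue by contraposition: assuming $X$ is not cofinally Bourbaki quasi-complete, fix a cofinally Bourbaki quasi-Cauchy sequence $\langle x_n\rangle$ with no cluster point and produce a single $f$ witnessing the failure of $(2)$. Because $\langle x_n\rangle$ has no cluster point, its value set $D=\{x_n\}$ is a countably infinite, closed, discrete set $\{v_i:i\in\mathbb{N}\}$ with no limit point, and no value is attained infinitely often; writing $x_n=v_{i(n)}$ this forces $i(n)\to\infty$. I would then set
\[
  f(x) := \min\Bigl(1,\ \inf_{i\in\mathbb{N}}\bigl(\tfrac1i + d(x,v_i)\bigr)\Bigr).
\]
Each block $x\mapsto \tfrac1i+d(x,v_i)$ is $1$-Lipschitz, so the infimum is $1$-Lipschitz and the truncation leaves $f$ continuous with values in $(0,1]$. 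The crucial point is strict positivity: if $f(x)=0$ then some $v_{i_k}$ with $i_k\to\infty$ would converge to $x$, contradicting that $D$ has no limit point. Since $f$ is bounded it maps \emph{every} sequence, in particular every cofinally Bourbaki quasi-Cauchy sequence, to a bounded and hence cofinally Bourbaki--Cauchy sequence, so $f$ meets all the hypotheses imposed on it in $(2)$. On the other hand $f(v_i)\le\tfrac1i$ yields $1/f(x_n)\ge i(n)\to\infty$, so the image of the cofinally Bourbaki quasi-Cauchy sequence $\langle x_n\rangle$ under $1/f$ has no bounded subsequence and is not cofinally Bourbaki--Cauchy. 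This contradicts $(2)$, so $(1)$ holds.

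The main obstacle is this last construction. One must exhibit a \emph{single} continuous nowhere-zero function whose reciprocal blows up along the \emph{entire} sequence, rather than merely being unbounded on each chainable component as in the earlier Lipschitz reciprocation theorem, while simultaneously keeping $f$ inside the preserving class. Boundedness of $f$ settles the second requirement for free, so the delicate part is ensuring global positivity and continuity of $f$ while forcing $f(x_n)\to 0$; this is exactly where the no-limit-point property of $D$ and the fact $i(n)\to\infty$ (each value being attained only finitely often) enter. The preceding Proposition on the disjoint family $\{M_j\}$ could be invoked to organize the points along chains, but the infimum construction above sidesteps the need for it.
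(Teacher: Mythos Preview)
Your proof is correct and follows the same overall strategy as the paper: the forward direction via clustering and continuity, the backward by contraposition, constructing a bounded positive continuous $f$ whose reciprocal blows up along the offending sequence. The only substantive difference is in how $f$ is produced for $(2)\Rightarrow(1)$. The paper defines $g(x_n)=2^{-n}$ on the closed discrete range $A=\{x_n\}$ and appeals to a Tietze-type extension into $(0,1)$, whereas you write down the explicit $1$-Lipschitz formula $f(x)=\min\bigl(1,\inf_i(\tfrac{1}{i}+d(x,v_i))\bigr)$ and verify positivity directly from the no-limit-point property of $D$. Your route is more self-contained---no extension theorem is invoked---and your handling of possible repetitions in $\langle x_n\rangle$ via the index map $i(n)\to\infty$ is cleaner than the paper's tacit passage to a sequence of distinct terms. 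The paper's version, in exchange, is shorter once the extension step is granted.
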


\begin{proof}
$(1)\Rightarrow (2)$ Let $g: X \to \mathbb{R}$ be any continuous function and let $\langle x_n\rangle$ be a cofinally Bourbaki quasi-Cauchy sequence in $X$. Since $X$ is cofinally Bourbaki quasi-complete, $\langle x_n\rangle$ has a convergent subsequence $\langle x_{n_k}\rangle$. Thus, $\langle g(x_{n_k})\rangle$ is a convergent subsequence of $\langle g(x_n)\rangle$. Hence $\langle g(x_n)\rangle$ is cofinally Bourbaki-Cauchy. Since we just proved every real-valued continuous function on $X$ maps every cofinally Bourbaki quasi-Cauchy sequence in $X$ to a cofinally Bourbaki-Cauchy sequence in $\mathbb{R}$, condition (2) follows.

$(2)\Rightarrow (1)$ Suppose that condition (1) fails. Then there exists a cofinally Bourbaki quasi-Cauchy sequence $\langle x_n\rangle$ of distinct points in $X$ that have no cluster point. Let $A := \{x_n : n \in \mathbb{N}\}$. Then $A$ is closed and discrete. Define a function $g : A \to \mathbb{R}$ by $g(x_n) := \frac{1}{2^n}$ for all $n \in \mathbb{N}$. Clearly, $g$ is continuous and it maps cofinally Bourbaki quasi-Cauchy sequences to cofinally Bourbaki-Cauchy sequences. Then there exists a continuous extension $f : X \to (0,1)$ of $f$. Now, since $(0,1)$ is Bourbaki-bounded in $\mathbb{R}$, every sequence in $(0,1)$ is cofinally Bourbaki–Cauchy \cite{Garrido2014}. Hence, $f$ maps every cofinally Bourbaki quasi-Cauchy in $X$ to cofinally Bourbaki-Cauchy sequences in $\mathbb{R}$. However, $f$ is nowhere zero and the sequence $\langle x_n\rangle$ is cofinally Bourbaki quasi–Cauchy in $X$, whereas the sequence $\langle \frac{1}{f(x_n)}\rangle = \langle n\rangle$ is not cofinally Bourbaki–Cauchy in $\mathbb{R}$, which is a contradiction.
\end{proof}

We present a characterization of cofinally Bourbaki quasi-complete metric spaces in terms of the behavior of continuous functions acting on cofinally Bourbaki quasi-Cauchy sequences. While this may appear analogous to the classical result that a metric space is cofinally complete if and only if every real-valued continuous function is CC-regular. However, the present setting is significantly more subtle because every sequence in \( \mathbb{R} \) is cofinally Bourbaki quasi-Cauchy, and thus it is not sufficient to require that real-valued continuous functions merely preserve such sequences. The result below therefore identifies a refined condition that fully captures the completeness structure of the space.

\begin{theorem}\label{continuous}
 Let $\langle X, d\rangle $ be a metric space. Then the following statements are equivalent:
\begin{enumerate}
    \item\(X\) is cofinally Bourbaki quasi-complete;
    \item Every continuous function $f$ from $X$ to any other metric space $\langle Y, \rho\rangle$ maps every cofinally Bourbaki quasi-Cauchy sequence in \( X \) to a sequence in \( Y \) that has a Cauchy subsequence;
    \item Every continuous function $f$ from $X$ to any other metric space $\langle Y, \rho\rangle$ maps every cofinally Bourbaki quasi-Cauchy in $X$ to a cofinally Cauchy sequence in $Y$;
     \item Every continuous function $f$ from $X$ to any other metric space $\langle Y, \rho\rangle$ maps every cofinally Bourbaki quasi-Cauchy in $X$ to a cofinally  Bourbaki-Cauchy sequence in $Y$;
    \item Every real-valued continuous function on \(X\) maps every cofinally Bourbaki quasi-Cauchy in $X$ to a cofinally Bourbaki-Cauchy sequence in $\mathbb R$.
\end{enumerate}
\end{theorem}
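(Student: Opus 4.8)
The plan is to prove the cycle $(1) \Rightarrow (2) \Rightarrow (3) \Rightarrow (4) \Rightarrow (5) \Rightarrow (1)$. The first four links are soft and follow from the inclusions among the various Cauchy-type sequence classes, while the real content is concentrated in $(5) \Rightarrow (1)$.

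For $(1) \Rightarrow (2)$: given a cofinally Bourbaki quasi-Cauchy sequence $\langle x_n\rangle$, cofinally Bourbaki quasi-completeness yields a cluster point and hence a convergent subsequence $x_{n_k}\to x$; continuity of $f$ gives $f(x_{n_k})\to f(x)$, a Cauchy subsequence of $\langle f(x_n)\rangle$. For $(2)\Rightarrow(3)$: any sequence possessing a Cauchy subsequence is cofinally Cauchy, since for each $\varepsilon>0$ a tail of that subsequence supplies an infinite index set on which the terms are pairwise within $\varepsilon$. For $(3)\Rightarrow(4)$: a cofinally Cauchy sequence is cofinally Bourbaki-Cauchy, because the infinite witnessing set $N_\varepsilon$ already lies in the single ball $S^1_\rho(y_{n_0},\varepsilon)$ about one of its own points, so the chain length $m=1$ suffices. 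Finally $(4)\Rightarrow(5)$ is the specialization $Y=\mathbb R$.

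The crux is $(5)\Rightarrow(1)$, which I would argue contrapositively. If $X$ is not cofinally Bourbaki quasi-complete, fix a cofinally Bourbaki quasi-Cauchy sequence $\langle x_n\rangle$ with no cluster point. No value can be repeated infinitely often (otherwise it would be a cluster point), so the range $A=\{x_n:n\in\mathbb N\}$ is infinite, and one checks that re-enumerating $A$ by distinct terms preserves the cofinally Bourbaki quasi-Cauchy property: for each $\varepsilon>0$ the witnessing index set produces an infinite subset of $A$ with pairwise $\varepsilon$-chains, a statement independent of the enumeration. Thus we may assume the $x_n$ are distinct. Then $A$ is closed and discrete, the map $x_n\mapsto n$ is continuous on $A$, and by the Tietze extension theorem (valid as metric spaces are normal) it extends to a continuous $f:X\to\mathbb R$ with $f(x_n)=n$.

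It remains to see that $\langle f(x_n)\rangle=\langle n\rangle$ is not cofinally Bourbaki-Cauchy in $\mathbb R$. The key computation is that in $\mathbb R$ one has $S^m_d(p,\varepsilon)=(p-m\varepsilon,p+m\varepsilon)$, so a real sequence is cofinally Bourbaki-Cauchy exactly when it admits a bounded subsequence; the properly divergent sequence $\langle n\rangle$ has none, contradicting $(5)$ and closing the cycle. I expect this final step to be the main obstacle---not technically, but conceptually: because every sequence in $\mathbb R$ is itself cofinally Bourbaki quasi-Cauchy, condition $(5)$ cannot derive its strength from preserving the source sequence's type, and its discriminating power instead comes entirely from the genuine boundedness restriction hidden in demanding a cofinally Bourbaki-Cauchy image.
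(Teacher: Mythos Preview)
Your proof is correct and takes essentially the same route as the paper: the same cycle $(1)\Rightarrow(2)\Rightarrow(3)\Rightarrow(4)\Rightarrow(5)\Rightarrow(1)$, with the substantive step $(5)\Rightarrow(1)$ argued contrapositively by Tietze-extending an unbounded function from the closed discrete range of a non-clustering cofinally Bourbaki quasi-Cauchy sequence. The only differences are cosmetic---the paper sets $f(x_n)=n^2$ where you set $f(x_n)=n$---and your version is in fact more careful than the paper's in justifying the reduction to distinct terms and in spelling out why $\langle n\rangle$ fails to be cofinally Bourbaki-Cauchy in $\mathbb{R}$.
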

\begin{proof}
\((1) \Rightarrow (2)\) This follows immediately from the fact that every cofinally Bourbaki quasi-Cauchy sequence in \( X \) has a cluster point.

The implications \((2) \Rightarrow (3)\) and \((3) \Rightarrow (4)\) are immediate, where as the implication \((4) \Rightarrow (5)\) is straightforward.

\((5) \Rightarrow (1)\) Suppose that condition (1) fails. Then there exists a cofinally Bourbaki quasi-Cauchy sequence $\langle x_n\rangle$ of distinct points in $X$ that have no cluster point. Let $A := \{x_n : n \in \mathbb{N}\}$. Then $A$ is closed and discrete. Define a function $f : A \to \mathbb{R}$ by $f(x_n) := n^2$ for all $n \in \mathbb{N}$. Clearly, $f$ is continuous. Since $A$ is closed, there exists a continuous extension $F : X \to \mathbb R$ of $f$. However, $F$ does not map the cofinally Bourbaki quasi-Cauchy $\langle x_n\rangle$ to a cofinally Bourbaki-Cauchy sequence, which yields a contradiction. 
\end{proof}
\begin{corollary}
Let $\langle X, d\rangle $ be a metric space and let $\langle Y, \|,\|\rangle$ be a not qC-precompact normed linear space. Then \(X\) is cofinally Bourbaki quasi-complete if and only if every continuous function from $X$ to $Y$ maps cofinally Bourbaki quasi-Cauchy sequences to cofinally Bourbaki quasi-Cauchy sequences.
\end{corollary}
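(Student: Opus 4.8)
The plan is to prove the corollary by leveraging Theorem~\ref{continuous}, since the corollary is precisely the specialization of the equivalence $(1)\Leftrightarrow(4)$ of that theorem to a fixed target space $Y$, together with the fact that in a not qC-precompact normed linear space the notions of cofinally Bourbaki--Cauchy and cofinally Bourbaki quasi-Cauchy sequences are interchangeable for our purposes. The forward direction is the easy half: if $X$ is cofinally Bourbaki quasi-complete, then every cofinally Bourbaki quasi-Cauchy sequence in $X$ has a cluster point, hence a convergent subsequence; applying any continuous $f\colon X\to Y$ yields a sequence with a convergent subsequence, which is in particular cofinally Bourbaki quasi-Cauchy (a convergent subsequence can be joined by arbitrarily fine chains). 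So the first step is to record this direction verbatim from the argument of $(1)\Rightarrow(2)$ in Theorem~\ref{continuous}.

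The substance is the converse. First I would assume, for contradiction, that $X$ is not cofinally Bourbaki quasi-complete, so there is a cofinally Bourbaki quasi-Cauchy sequence $\langle x_n\rangle$ of distinct points with no cluster point; as before, $A:=\{x_n:n\in\mathbb N\}$ is closed and discrete in $X$. The key step is to use the hypothesis that $Y$ is \emph{not} qC-precompact. By \cite[Theorem~2.2]{Adhikary2024}, failure of qC-precompactness means some sequence in $Y$ has no Bourbaki--Cauchy subsequence; in a normed linear space this can be upgraded to produce a sequence $\langle y_n\rangle$ in $Y$ that is genuinely spread out—specifically, one for which no cofinally Bourbaki quasi-Cauchy selection exists, e.g.\ a sequence going to infinity in norm along a ray, so that for a suitable $\varepsilon$ no two distinct terms lie in a common $\varepsilon$-chainable component. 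I would then define $g\colon A\to Y$ by $g(x_n):=y_n$, which is continuous on the discrete closed set $A$, and extend it to a continuous $F\colon X\to Y$ (the extension exists by closedness of $A$ and the normed-space target, via a partition-of-unity or Dugundji-type extension). The contradiction is that $F$ sends the cofinally Bourbaki quasi-Cauchy sequence $\langle x_n\rangle$ to $\langle y_n\rangle$, which by construction is not cofinally Bourbaki quasi-Cauchy, contradicting the hypothesis.

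The main obstacle I anticipate is the normed-space construction of the escaping sequence $\langle y_n\rangle$ and verifying it is not cofinally Bourbaki quasi-Cauchy. The asymmetry with Theorem~\ref{continuous} is exactly that $\mathbb R$, being qC-precompact-like in the relevant sense, makes \emph{every} sequence cofinally Bourbaki quasi-Cauchy, which is why the theorem had to phrase its condition in terms of cofinally Bourbaki--Cauchy images; here the not-qC-precompact hypothesis on $Y$ is what restores enough rigidity to detect the failure of clustering directly through cofinally Bourbaki quasi-Cauchy preservation. Concretely, I would pick a unit vector $e\in Y$ and set $y_n:=n\,e$; then any two points $y_i,y_j$ with $i\neq j$ satisfy $\|y_i-y_j\|\geq 1$, and one checks that along $ne$ the $\varepsilon$-chainable components (for $\varepsilon<1$) separate the terms into distinct chains, so no infinite subset $N_\varepsilon$ can have all its terms mutually joinable by a single $\varepsilon$-chain in a bounded-component sense—hence $\langle y_n\rangle$ fails to be cofinally Bourbaki quasi-Cauchy. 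Making this separation rigorous (showing distinct integer multiples of $e$ cannot share a fine chainable component in the relevant subset) is the delicate point, and it is exactly where the not qC-precompact hypothesis, rather than mere unboundedness, must be invoked.
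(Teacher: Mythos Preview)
Your overall strategy coincides with the paper's: for the forward direction both of you use clustering of cofinally Bourbaki quasi-Cauchy sequences in $X$ to produce a convergent subsequence, whose image under a continuous $f$ is convergent and hence cofinally Bourbaki quasi-Cauchy; for the converse both of you take a non-clustering cofinally Bourbaki quasi-Cauchy sequence $\langle x_n\rangle$ of distinct points, set $A=\{x_n:n\in\mathbb N\}$ (closed and discrete), choose a sequence $\langle y_n\rangle$ in $Y$ that is not cofinally Bourbaki quasi-Cauchy, send $x_n\mapsto y_n$, and extend continuously to all of $X$ via the Dugundji extension theorem. So the architecture is identical.

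The gap is in your explicit candidate $y_n=ne$. In a normed linear space $Y$, the line segment from any $y$ to any $z$ can be subdivided into finitely many steps of length $<\varepsilon$; hence \emph{any} two points of $Y$ are joined by an $\varepsilon$-chain in $Y$, for every $\varepsilon>0$. In particular the $\varepsilon$-chainable component of each $ne$ is all of $Y$, and $\langle ne\rangle$ \emph{is} cofinally Bourbaki quasi-Cauchy (take $N_\varepsilon=\mathbb N$). Your claim that ``for $\varepsilon<1$ the $\varepsilon$-chainable components separate the terms'' would be true only if chainability were measured inside the range $\{ne:n\in\mathbb N\}$, but that is not the definition used throughout the paper. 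The paper's proof does not attempt an explicit construction at this point; it simply invokes the hypothesis that $Y$ is not qC-precompact and asserts that a sequence $\langle y_n\rangle$ in $Y$ which is not cofinally Bourbaki quasi-Cauchy exists. Your instinct that this is ``the delicate point'' is correct, but the specific sequence you propose does not do the job.
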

\begin{proof}
If \(X\) is cofinally Bourbaki quasi-complete, then by condition~(4) of Theorem~\ref{continuous}, every continuous function from \(X\) to a metric space \(Y\) maps cofinally Bourbaki quasi-Cauchy sequences to cofinally Bourbaki-Cauchy sequences. Since every cofinally Bourbaki-Cauchy sequence is also cofinally Bourbaki quasi-Cauchy, the necessary condition holds.

Conversely, suppose \(X\) is not cofinally Bourbaki quasi-complete. Then there exists a cofinally Bourbaki quasi-Cauchy sequence \(\langle x_n \rangle\) of distinct points in \(X\) that has no cluster point. Let \(A := \{x_n : n \in \mathbb{N}\}\). Then \(A\) is closed and discrete. Define a function \(f : A \to Y\) by \(f(x_n) := y_n\) for all \(n \in \mathbb{N}\), where \(\langle y_n \rangle\) is a sequence in \(Y\) that is not cofinally Bourbaki quasi-Cauchy. Clearly, \(f\) is continuous. Since \(A\) is closed and \(Y\) is a normed linear space, by the Dugundji extension theorem~\cite[pp.~188]{Dugundji}, there exists a continuous extension \(F : X \to Y\) of \(f\). However, \(F\) does not map the cofinally Bourbaki quasi-Cauchy sequence \(\langle x_n \rangle\) to a cofinally Bourbaki-Cauchy sequence in \(Y\), which yields a contradiction.  
\end{proof}    

We now turn to the setting of completions and consider how cofinally Bourbaki quasi-completeness of the completion \(\widehat{X}\) may be described in terms of the behavior of Cauchy-continuous functions defined on the original space \(X\). 

\begin{theorem}\label{cauchycontinuous_completion}
 Let $\langle X, d\rangle $ be a metric space and $(\widehat{X}, \widehat{d})$ be its completion. Then the following statements are equivalent:
    \begin{enumerate}
    \item $\widehat{X}$ is cofinally Bourbaki quasi-complete;
    \item Every Cauchy-continuous function $f$ from $X$ to any other metric space $\langle Y, \rho\rangle$ maps every cofinally Bourbaki quasi-Cauchy in $X$ to a sequence in \( Y \) that has a Cauchy subsequence;
    \item Every Cauchy-continuous function $f$ from $X$ to any other metric space $\langle Y, \rho\rangle$ maps every cofinally Bourbaki quasi-Cauchy in $X$ to a cofinally Cauchy sequence in $Y$;
    \item Every Cauchy-continuous function $f$ from $X$ to any other metric space $\langle Y, \rho\rangle$ maps every cofinally Bourbaki quasi-Cauchy in $X$ to a cofinally Bourbaki-Cauchy sequence in $Y$;
    \item Every real-valued Cauchy-continuous function on \(X\) maps every cofinally Bourbaki quasi-Cauchy in $X$ to a cofinally Bourbaki-Cauchy sequence in $\mathbb R$;
    \item Every cofinally Bourbaki quasi–Cauchy sequence in \(X\) has a Cauchy subsequence.
\end{enumerate}
\end{theorem}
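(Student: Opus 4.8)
The plan is to prove the cyclic chain
$(1)\Rightarrow(2)\Rightarrow(3)\Rightarrow(4)\Rightarrow(5)\Rightarrow(6)\Rightarrow(1)$,
where the substantive arguments occur only at $(1)\Rightarrow(2)$, $(5)\Rightarrow(6)$, and $(6)\Rightarrow(1)$; the three middle links are forced by the definitions. The recurring observation underlying everything is that, since $\widehat{d}$ restricts to $d$ on $X$, a finite $\varepsilon$-chain in $X$ is also one in $\widehat{X}$, so a cofinally Bourbaki quasi-Cauchy sequence in $X$ remains cofinally Bourbaki quasi-Cauchy when viewed in $\widehat{X}$. I will also use the standard fact that a real-valued function on $X$ is Cauchy-continuous precisely when it is the restriction to $X$ of a continuous function on $\widehat{X}$ (this uses completeness of $\mathbb{R}$), together with the equivalence: $\langle x_n\rangle$ has a Cauchy subsequence in $X$ if and only if it has a cluster point in $\widehat{X}$.

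For $(1)\Rightarrow(2)$, I would take a Cauchy-continuous $f:X\to Y$ and a cofinally Bourbaki quasi-Cauchy $\langle x_n\rangle$ in $X$. Viewing $\langle x_n\rangle$ in $\widehat{X}$ and invoking cofinal Bourbaki quasi-completeness of $\widehat{X}$ produces a subsequence $\langle x_{n_k}\rangle$ converging in $\widehat{X}$; this subsequence is Cauchy in $X$, so Cauchy-continuity of $f$ makes $\langle f(x_{n_k})\rangle$ Cauchy in $Y$, giving $(2)$. The links $(2)\Rightarrow(3)\Rightarrow(4)\Rightarrow(5)$ are routine: a sequence with a Cauchy subsequence is cofinally Cauchy (restrict to the tail of that subsequence), a cofinally Cauchy sequence is cofinally Bourbaki-Cauchy (a single $\varepsilon$-ball is an $S^1_\rho(\cdot,\varepsilon)$), and $(4)\Rightarrow(5)$ is the specialization $Y=\mathbb{R}$.

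The first genuine obstacle is $(5)\Rightarrow(6)$, which I would prove by contraposition. If $(6)$ fails, there is a cofinally Bourbaki quasi-Cauchy sequence $\langle x_n\rangle$ with no Cauchy subsequence; passing to a subsequence I may assume its terms are distinct, so $A:=\{x_n:n\in\mathbb{N}\}$ has no cluster point in $\widehat{X}$ and is therefore closed and discrete in $\widehat{X}$. Defining $g(x_n):=n$ on $A$ and extending by the Tietze theorem to a continuous $\widehat{g}:\widehat{X}\to\mathbb{R}$, the restriction $f:=\widehat{g}|_X$ is Cauchy-continuous, yet $\langle f(x_n)\rangle=\langle n\rangle$ is not cofinally Bourbaki-Cauchy in $\mathbb{R}$ (every $S^m(p,\varepsilon)$ is a bounded interval, so no bounded $S^m(p,\varepsilon)$ can contain infinitely many integers), contradicting $(5)$. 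The delicate point is that one must extend over $\widehat{X}$ rather than merely over $X$: a Tietze extension on $X$ alone need not be Cauchy-continuous, whereas closedness of $A$ in $\widehat{X}$ is exactly what delivers Cauchy-continuity after restriction.

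The second obstacle is $(6)\Rightarrow(1)$, where the work is a chain-perturbation argument. Given a cofinally Bourbaki quasi-Cauchy $\langle \widehat{x}_n\rangle$ in $\widehat{X}$, I would choose $x_n\in X$ with $\widehat{d}(x_n,\widehat{x}_n)<\tfrac{1}{n}$ and verify that $\langle x_n\rangle$ is cofinally Bourbaki quasi-Cauchy in $X$. Fixing $\varepsilon>0$ and the infinite index set on which the $\widehat{x}_n$ are joined by $\tfrac{\varepsilon}{3}$-chains in $\widehat{X}$, I replace each interior vertex of such a chain by a nearby point of $X$ (within $\tfrac{\varepsilon}{3}$), so that for sufficiently large indices the triangle inequality upgrades the perturbed chain to a genuine $\varepsilon$-chain in $X$ joining $x_j$ and $x_k$; this is the same mechanism used in the proof of $(4)\Rightarrow(1)$ in Theorem~\ref{completeion_uniformly_lipschitz_theorem}. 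Condition $(6)$ then yields a Cauchy subsequence of $\langle x_n\rangle$, which converges in $\widehat{X}$, and since $\widehat{d}(x_n,\widehat{x}_n)\to 0$ its limit is a cluster point of $\langle \widehat{x}_n\rangle$, establishing $(1)$. I expect $(5)\Rightarrow(6)$ to be the main conceptual hurdle, since it is the only step that forces one to leave $X$ and exploit the structure of the completion in order to manufacture a Cauchy-continuous witness.
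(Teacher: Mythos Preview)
Your proposal is correct and follows essentially the same cyclic scheme $(1)\Rightarrow(2)\Rightarrow\cdots\Rightarrow(6)\Rightarrow(1)$ as the paper, with the same substantive content at the three nontrivial links. The only cosmetic difference is at $(5)\Rightarrow(6)$: the paper invokes a ready-made Cauchy-continuous extension theorem (Beer's book, Theorem~3.4) to extend $f(x_n)=n$ from $A$ to a Cauchy-continuous $F:X\to\mathbb{R}$, whereas you unpack this by applying Tietze on $\widehat{X}$ and restricting---which is precisely how that extension theorem is proved, so the two arguments are equivalent.
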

\begin{proof}
 \((1) \Rightarrow (2)\) It is immediate from the fact that every cofinally Bourbaki quasi-Cauchy in $X$ has a Cauchy subsequence.  

 The implications \((2) \Rightarrow (3)\) and \((3) \Rightarrow (4)\) are immediate, where as the implication \((4) \Rightarrow (5)\) is straightforward.

 \((5) \Rightarrow (6)\) Suppose that condition~\((6)\) fails. Then there exists a cofinally Bourbaki quasi-Cauchy sequence \(\langle x_n \rangle\) of distinct terms in \(X\) that has no Cauchy subsequence. Let \(A = \{x_n : n \in \mathbb{N}\}\). Define \(f : A \to \mathbb{R}\) by \(f(x_n) = n\) for all \(n \in \mathbb{N}\). Clearly, \(f\) is Cauchy-continuous on \(A\). By \cite[Theorem 3.4]{BeerBook}, \(f\) admits a Cauchy-continuous extension \(F : X \to \mathbb{R}\). However, \(F\) does not map the cofinally Bourbaki quasi-Cauchy sequence \(\langle x_n \rangle\) to a cofinally Bourbaki–Cauchy sequence, which yields a contradiction.
 \((6) \Rightarrow (1)\) Let $\langle \widehat{x}_n\rangle$ be a cofinally Bourbaki quasi-Cauchy sequence in $\widehat{X}$. For each $n \in \mathbb{N}$, choose $x_n \in X$ such that $\widehat{d}(\widehat{x}_n, x_n) \leq \frac{1}{n}$. Let $\varepsilon > 0$ be given. Then there exists an infinite subset $N_\varepsilon \subseteq \mathbb{N}$ such that for all $i, \ell \in N_\varepsilon$, the points $\widehat{x}_i$ and $\widehat{x}_\ell$ can be joined by an $\frac{\varepsilon}{3}$-chain in $\widehat{X}$. Then there exists an infinite subset $M_\varepsilon$ of $N_\varepsilon$ such that for all $i, \ell \in M_\varepsilon$, the points $x_i$ and $x_\ell$ can be joined by an $\varepsilon$-chain in $X$. Hence, the sequence $\langle x_n \rangle$ is cofinally Bourbaki quasi-Cauchy in $X$, and by assumption, it has a Cauchy subsequence. Hence, the sequence $\langle x_n \rangle$ has a cluster point in $\widehat{X}$. It follows that $\langle \widehat{x}_n \rangle$ also clusters in $\widehat{X}$.
\end{proof}

The next result characterizes cofinally Bourbaki quasi-completeness in terms of cofinally Bourbaki completeness and the behavior of CBC-regular functions. In particular, it identifies a precise condition under which cofinally Bourbaki completeness extends to cofinally Bourbaki quasi-completeness. 

\begin{theorem}\label{cbqc_cbc}
Let $\langle X, d\rangle $ be a metric space. Then the following statements are equivalent:
\begin{enumerate}
    \item $X$ is cofinally Bourbaki quasi-complete;
    \item $X$ is cofinally Bourbaki-complete and every CBC-regular function from $X$ to any metric space $Y$ maps cofinally Bourbaki quasi-Cauchy sequences to cofinally Bourbaki-Cauchy sequences;
    \item $X$ is cofinally Bourbaki-complete and every real-valued CBC-regular function on $X$ maps cofinally Bourbaki quasi-Cauchy sequences to cofinally Bourbaki-Cauchy sequences;
    \item $X$ is cofinally Bourbaki-complete and every cofinally Bourbaki quasi–Cauchy sequence in $X$ is cofinally Bourbaki-Cauchy.
\end{enumerate}
\end{theorem}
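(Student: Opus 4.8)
The plan is to establish the cycle $(1)\Rightarrow(2)\Rightarrow(3)\Rightarrow(4)\Rightarrow(1)$, leaning on two elementary observations that I would record first. The first is that a cofinally Bourbaki--Cauchy sequence is always cofinally Bourbaki quasi-Cauchy: if $x_n\in S^m_d(p,\varepsilon)$ for all $n\in N_\varepsilon$, then any two such points are joined to $p$, and hence to each other, by an $\varepsilon$-chain (of length at most $2m$). The second is that any sequence possessing a cluster point $x$ is already cofinally Bourbaki--Cauchy, since the infinitely many indices $n$ with $x_n\in S_d(x,\varepsilon)=S^1_d(x,\varepsilon)$ furnish the required witness with $m=1$ and $p=x$. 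These two facts do most of the bookkeeping.

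Three of the four implications are then short. For $(1)\Rightarrow(2)$: the first observation shows every cofinally Bourbaki--Cauchy sequence is cofinally Bourbaki quasi-Cauchy, hence clusters by hypothesis, so $X$ is cofinally Bourbaki complete; and given a CBC-regular $f$ and a cofinally Bourbaki quasi-Cauchy $\langle x_n\rangle$, quasi-completeness gives a cluster point, so by the second observation $\langle x_n\rangle$ is cofinally Bourbaki--Cauchy and therefore $\langle f(x_n)\rangle$ is cofinally Bourbaki--Cauchy by CBC-regularity. The implication $(2)\Rightarrow(3)$ is just the specialization $Y=\mathbb R$. For $(4)\Rightarrow(1)$: a cofinally Bourbaki quasi-Cauchy sequence is cofinally Bourbaki--Cauchy by $(4)$, hence clusters by cofinal Bourbaki completeness, which is exactly quasi-completeness.

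The crux is $(3)\Rightarrow(4)$, and the key device I would isolate is that on a cofinally Bourbaki complete space every continuous $F:X\to\mathbb R$ is automatically CBC-regular: a cofinally Bourbaki--Cauchy sequence clusters, so its continuous image has a convergent, hence bounded, subsequence, and a real sequence with a bounded subsequence is cofinally Bourbaki--Cauchy (bounded subsets of $\mathbb R$ being Bourbaki bounded). With this in hand I argue by contradiction: suppose some cofinally Bourbaki quasi-Cauchy $\langle x_n\rangle$ fails to be cofinally Bourbaki--Cauchy. By the second observation it has no cluster point, so $A=\{x_n:n\in\mathbb N\}$ is closed and discrete and realizes infinitely many distinct values; enumerating them as $a_1,a_2,\dots$ and setting $f(a_k)=k$ produces a continuous $f:A\to\mathbb R$ with $f(x_n)\to\infty$, since each value recurs only finitely often. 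Extending $f$ continuously to $F:X\to\mathbb R$ (Tietze), $F$ is CBC-regular by the device above, so hypothesis $(3)$ forces $\langle F(x_n)\rangle$ to be cofinally Bourbaki--Cauchy; but $F(x_n)\to\infty$ admits no bounded subsequence, a contradiction.

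The only genuinely load-bearing step is this last construction, and it is essentially the continuous-extension argument already used in the proof of Theorem~\ref{continuous}. The main obstacle is therefore conceptual rather than computational: recognizing that cofinal Bourbaki completeness is precisely what upgrades \emph{continuous} to \emph{CBC-regular}, so that the second hypothesis of $(3)$ can legitimately be applied to an explicitly unbounded continuous function. Everything else reduces to the two opening observations, and notably no passage to a distinct-terms subsequence is required, since defining $f$ on the distinct values already yields $F(x_n)\to\infty$.
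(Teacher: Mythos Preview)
Your proof is correct, and the route you take for $(3)\Rightarrow(4)$ differs from the paper's in an instructive way. The paper does not invoke Tietze or continuity at all: it defines $f:X\to\mathbb R$ directly by $f(x_n)=2^n$ on a sequence of \emph{distinct} points and $f(x)=0$ elsewhere, and then asserts that this (discontinuous) $f$ is CBC-regular---a claim that rests on the fact that $\langle x_n\rangle$ is not cofinally Bourbaki--Cauchy, so any cofinally Bourbaki--Cauchy sequence meeting $\{x_n\}$ in infinitely many indices must repeat some value infinitely often. In particular, the paper's argument for $(3)\Rightarrow(4)$ uses only the second clause of $(3)$, never the cofinal Bourbaki completeness hypothesis, and so actually proves the slightly sharper implication ``second part of $(3)$ $\Rightarrow$ second part of $(4)$''.

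Your approach trades that direct verification for a cleaner conceptual step: you use the first clause of $(3)$ to upgrade continuity to CBC-regularity, and then manufacture a continuous unbounded $F$ via Tietze. What you gain is transparency (the CBC-regularity of $F$ is immediate once the device is stated) and, as you note, no need to pass to a distinct-terms subsequence---a reduction the paper makes without comment. What you lose is the sharper decoupling of the two clauses of $(3)$, since your argument genuinely consumes both. Either way the cycle closes correctly.
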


\begin{proof}
 $(1)\Rightarrow (2)$ Let $f : X \to (Y, \rho)$ be a CBC-regular function, and let $\langle x_n\rangle $ be a cofinally Bourbaki quasi–Cauchy sequence in $X$. Since $X$ is cofinally Bourbaki quasi-complete, there exists a subsequence $\langle x_{n_k}\rangle $ of $\langle x_n\rangle $ that converges in $X$. In particular, $\langle x_{n_k}\rangle $ is cofinally Bourbaki-Cauchy, and hence the sequence $\langle f(x_{n_k})\rangle $ is cofinally Bourbaki-Cauchy in $Y$ by our assumption. Consequently, $f$ maps cofinally Bourbaki quasi-Cauchy sequences to cofinally Bourbaki-Cauchy sequences.

 $(2)\Rightarrow (3)$ It is obvious.

$(3)\Rightarrow (4)$ Suppose there exists a cofinally Bourbaki quasi–Cauchy sequence $\langle x_n\rangle $ of distinct points in $X$ which is not cofinally Bourbaki-Cauchy. Define a function $f : X \to \mathbb{R}$ by
\[
f(x) = 
\begin{cases}
2^n, & \text{if } x = x_n \text{ for some } n \in \mathbb{N}, \\
0, & \text{otherwise}.
\end{cases}
\]
Clearly, $f$ is CBC-regular. However, the sequence $\langle x_n\rangle$ is cofinally Bourbaki quasi–Cauchy in $X$, whereas the sequence $\langle {f(x_n)}\rangle = \langle 2^n\rangle$ is not cofinally Bourbaki–Cauchy in $\mathbb{R}$, which is a contradiction.

$(4)\Rightarrow (1)$ It is immediate.
    
\end{proof}

The following theorem can be proved similarly:

\begin{theorem}
Let $\langle X, d\rangle $ be a metric space. Then the following statements are equivalent:
\begin{enumerate}
    \item $X$ is cofinally Bourbaki quasi-complete;
    \item $X$ is cofinally complete and every CC-regular function from $X$ to any metric space $Y$ maps cofinally Bourbaki quasi-Cauchy sequences to cofinally Cauchy sequences;
    \item $X$ is cofinally complete and every real-valued CC-regular function on $X$ maps cofinally Bourbaki quasi-Cauchy sequences to cofinally Cauchy sequences;
    \item $X$ is cofinally complete and every cofinally Bourbaki quasi–Cauchy sequence in $X$ is cofinally Cauchy.
\end{enumerate}
\end{theorem}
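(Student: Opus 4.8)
The plan is to mirror the proof of Theorem~\ref{cbqc_cbc}, systematically replacing cofinal Bourbaki-completeness by cofinal completeness, CBC-regularity by CC-regularity, and cofinally Bourbaki-Cauchy sequences by cofinally Cauchy sequences, and to run the cyclic scheme $(1)\Rightarrow(2)\Rightarrow(3)\Rightarrow(4)\Rightarrow(1)$.

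For $(1)\Rightarrow(2)$, I would first recall (as noted in the proof of Theorem~\ref{uniformly_lipschitz_firsttheorem}) that a cofinally Bourbaki quasi-complete space is cofinally complete, which supplies the first half of (2). For the second half, given a CC-regular map $f:X\to Y$ and a cofinally Bourbaki quasi-Cauchy sequence $\langle x_n\rangle$, cofinal Bourbaki quasi-completeness yields a cluster point and hence a convergent subsequence $\langle x_{n_k}\rangle$; a convergent sequence is cofinally Cauchy, so CC-regularity forces $\langle f(x_{n_k})\rangle$ to be cofinally Cauchy, and a sequence possessing a cofinally Cauchy subsequence is itself cofinally Cauchy. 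The implication $(2)\Rightarrow(3)$ is the trivial restriction to $Y=\mathbb{R}$, and $(4)\Rightarrow(1)$ is immediate: by (4) every cofinally Bourbaki quasi-Cauchy sequence is cofinally Cauchy, and cofinal completeness then provides a cluster point.

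The substantive step is $(3)\Rightarrow(4)$. Since cofinal completeness is already assumed in (3), it remains to show that every cofinally Bourbaki quasi-Cauchy sequence is cofinally Cauchy. Arguing by contradiction, suppose $\langle x_n\rangle$ is cofinally Bourbaki quasi-Cauchy, may be taken to have distinct terms, but is not cofinally Cauchy. A key preliminary observation is that any sequence with a cluster point is automatically cofinally Cauchy; hence $\langle x_n\rangle$ has no cluster point and $A:=\{x_n:n\in\mathbb{N}\}$ is closed and discrete. The negation of cofinal Cauchyness furnishes an $\varepsilon_0>0$ admitting no infinite subset of $\langle x_n\rangle$ with pairwise distances below $\varepsilon_0$, which translates into the crucial fact that every ball $S_d(p,\varepsilon_0/2)$ contains only finitely many terms $x_n$ (otherwise infinitely many would lie within $\varepsilon_0$ of one another). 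I would then define $f:X\to\mathbb{R}$ to equal $n$ (equivalently $2^n$) at $x_n$ and $0$ elsewhere, and verify that $f$ is CC-regular.

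The main obstacle is precisely this CC-regularity verification. Given a cofinally Cauchy sequence $\langle z_k\rangle$ in $X$, applying the cofinal Cauchy property at scale $\varepsilon_0/2$ produces an infinite index set on which all the $z_k$ lie in a single ball of radius $\varepsilon_0/2$; by the crucial fact above, this ball meets $A$ in only finitely many points, so $f$ takes only finitely many values along that index set. Passing to a constant subsequence shows $\langle f(z_k)\rangle$ is cofinally Cauchy, whence $f$ is CC-regular. Since $\langle f(x_n)\rangle=\langle n\rangle$ is patently not cofinally Cauchy in $\mathbb{R}$, this contradicts (3), completing the argument.
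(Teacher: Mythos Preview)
Your proposal is correct and follows exactly the route the paper intends: the paper simply says this theorem ``can be proved similarly'' to Theorem~\ref{cbqc_cbc}, and your cyclic scheme with the systematic replacements (cofinal Bourbaki-completeness $\to$ cofinal completeness, CBC-regular $\to$ CC-regular, cofinally Bourbaki-Cauchy $\to$ cofinally Cauchy) is precisely that analogy. In fact you supply more detail than the paper does even for Theorem~\ref{cbqc_cbc}---notably the verification that the function $f$ in $(3)\Rightarrow(4)$ is CC-regular via the finitely-many-terms-per-ball argument, which the paper leaves as ``Clearly, $f$ is CBC-regular'' in the analogous step; the observation that $A$ is closed and discrete is harmless but not actually needed, since $f$ is defined globally rather than by extension.
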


\begin{theorem}
 Let $\langle X, d\rangle $ be a metric space and $(\widehat{X}, \widehat{d})$ be its completion. Then the following statements are equivalent:
\begin{enumerate}
    \item $\widehat{X}$ is cofinally Bourbaki quasi-complete;
    \item $\widehat{X}$ is cofinally Bourbaki-complete and every CBC-regular function from $X$ to any other metric space $(Y, \rho)$ maps cofinally Bourbaki quasi-Cauchy sequences to cofinally Bourbaki-Cauchy sequences;
    \item $\widehat{X}$ is cofinally Bourbaki-complete and every real-valued CBC-regular function on $X$ maps cofinally Bourbaki quasi-Cauchy sequences to cofinally Bourbaki-Cauchy sequences;
    \item $\widehat{X}$ is cofinally Bourbaki-complete and every cofinally Bourbaki quasi–Cauchy sequence in $X$ is cofinally Bourbaki-Cauchy.
\end{enumerate}
\end{theorem}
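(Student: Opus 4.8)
The plan is to follow the template of Theorem~\ref{cbqc_cbc}, replacing the role of cofinal Bourbaki quasi-completeness of $X$ by that of $\widehat{X}$, and to lean on the completion characterization of Theorem~\ref{cauchycontinuous_completion}, specifically the equivalence of its condition (1) with condition (6): $\widehat{X}$ is cofinally Bourbaki quasi-complete if and only if every cofinally Bourbaki quasi-Cauchy sequence in $X$ has a Cauchy subsequence. I would run the cyclic scheme $(1)\Rightarrow(2)\Rightarrow(3)\Rightarrow(4)\Rightarrow(1)$. Throughout I will repeatedly invoke two elementary but decisive facts: (i) every cofinally Bourbaki-Cauchy sequence is cofinally Bourbaki quasi-Cauchy (concatenate the two chains from a common center $p$), and (ii) cofinal Bourbaki-Cauchyness is a genuinely cofinal property, so a sequence is cofinally Bourbaki-Cauchy as soon as one of its subsequences is (the witnessing infinite index set for the subsequence is itself an infinite subset of $\mathbb{N}$).

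For $(1)\Rightarrow(2)$, I first note that cofinal Bourbaki quasi-completeness of $\widehat{X}$ forces cofinal Bourbaki-completeness of $\widehat{X}$: every cofinally Bourbaki-Cauchy sequence in $\widehat{X}$ is cofinally Bourbaki quasi-Cauchy by (i), hence clusters. For the function part, let $f:X\to Y$ be CBC-regular and let $\langle x_n\rangle$ be cofinally Bourbaki quasi-Cauchy in $X$. By Theorem~\ref{cauchycontinuous_completion} it admits a Cauchy subsequence $\langle x_{n_k}\rangle$, which is Bourbaki-Cauchy and a fortiori cofinally Bourbaki-Cauchy; CBC-regularity makes $\langle f(x_{n_k})\rangle$ cofinally Bourbaki-Cauchy, and by (ii) the full image $\langle f(x_n)\rangle$ is cofinally Bourbaki-Cauchy. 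The implication $(2)\Rightarrow(3)$ is the trivial restriction to real-valued targets.

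For $(3)\Rightarrow(4)$ the cofinal Bourbaki-completeness of $\widehat{X}$ is already part of the hypothesis, so only the sequence statement needs proof. Arguing by contradiction, I would take a cofinally Bourbaki quasi-Cauchy sequence $\langle x_n\rangle$ of distinct points that is not cofinally Bourbaki-Cauchy and define $f:X\to\mathbb{R}$ by $f(x_n)=2^n$ and $f\equiv 0$ off $\{x_n\}$, exactly as in Theorem~\ref{cbqc_cbc}. The crux is to verify that $f$ is CBC-regular: given a cofinally Bourbaki-Cauchy $\langle z_k\rangle$ in $X$, one shows $\langle f(z_k)\rangle$ has a bounded subsequence by a case split — if infinitely many $z_k$ lie off $\{x_n\}$ the image hits $0$ cofinally; if some $x_{n_0}$ is attained infinitely often the image is constant along a subsequence; and the remaining case, a distinct-valued cofinal selection from $\{x_n\}$, is impossible, since by (ii) it would force $\langle x_n\rangle$ itself to be cofinally Bourbaki-Cauchy. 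Then $\langle f(x_n)\rangle=\langle 2^n\rangle$ has no bounded subsequence, contradicting $(3)$.

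Finally, for $(4)\Rightarrow(1)$ I would use Theorem~\ref{cauchycontinuous_completion} again: it suffices that every cofinally Bourbaki quasi-Cauchy sequence $\langle x_n\rangle$ in $X$ has a Cauchy subsequence. By $(4)$ such a sequence is cofinally Bourbaki-Cauchy in $X$, hence cofinally Bourbaki-Cauchy in $\widehat{X}$ since $S^m_d(p,\varepsilon)\subseteq S^m_{\widehat{d}}(p,\varepsilon)$; cofinal Bourbaki-completeness of $\widehat{X}$ then yields a cluster point, and therefore a convergent, thus Cauchy, subsequence. I expect the main obstacle to be the CBC-regularity check in $(3)\Rightarrow(4)$: it is the only place where the purely cofinal character of Bourbaki-Cauchyness does real work, and the delicate point is ruling out the distinct-valued case, which is precisely where fact (ii) is indispensable.
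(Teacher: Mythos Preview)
Your proposal is correct and follows essentially the same cyclic scheme $(1)\Rightarrow(2)\Rightarrow(3)\Rightarrow(4)\Rightarrow(1)$ as the paper, with the same counterexample function in $(3)\Rightarrow(4)$. The only minor differences are that for $(4)\Rightarrow(1)$ the paper argues directly with a sequence in $\widehat{X}$ approximated from $X$ (rather than invoking condition~(6) of Theorem~\ref{cauchycontinuous_completion} as you do), and that you supply a detailed case-split for the CBC-regularity of $f$ where the paper simply writes ``Clearly, $f$ is CBC-regular''; your added justification is a welcome clarification rather than a departure.
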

\begin{proof}
 $(1)\Rightarrow (2)$ Since $\widehat{X}$ is cofinally Bourbaki quasi-complete, it is cofinally Bourbaki-complete. The rest follows from the fact that every cofinally Bourbaki quasi-Cauchy sequence in $X$ has a cofinally Bourbaki-Cauchy subsequence by (1).
 
  $(2)\Rightarrow (3)$ It is obvious.  

  $(3)\Rightarrow(4)$ The proof proceeds analogously to the implication \((3) \Rightarrow (4)\) of Theorem~\ref{cbqc_cbc}.

  $(4)\Rightarrow (1)$ Let \(\langle \widehat{x}_n \rangle\) be a cofinally Bourbaki quasi-Cauchy sequence in \(\widehat{X}\). For each \(n \in \mathbb{N}\), choose \(x_n \in X\) such that \(\widehat{d}(\widehat{x}_n, x_n) \leq \frac{1}{n}\). Then the sequence \(\langle x_n \rangle\) is cofinally Bourbaki quasi-Cauchy in \(X\) (see the proof of \((6) \Rightarrow (1)\) in Theorem~\ref{cauchycontinuous_completion}), and by assumption, it is cofinally Bourbaki-Cauchy. Since \(\widehat{X}\) is cofinally Bourbaki-complete, the sequence \(\langle x_n \rangle\) has a cluster point in \(\widehat{X}\). It follows that \(\langle \widehat{x}_n \rangle\) also clusters in \(\widehat{X}\).
\end{proof}

Similarly, we have the following:

\begin{theorem}
 Let $\langle X, d\rangle $ be a metric space and $(\widehat{X}, \widehat{d})$ be its completion. Then the following statements are equivalent:
\begin{enumerate}
    \item $\widehat{X}$ is cofinally Bourbaki quasi-complete;
    \item $\widehat{X}$ is cofinally complete and every CC-regular function from $X$ to any other metric space $\langle Y, \rho\rangle$ maps cofinally Bourbaki quasi-Cauchy sequences to cofinally Cauchy sequences;
    \item $\widehat{X}$ is cofinally complete and every real-valued CC-regular function on $X$ maps cofinally Bourbaki quasi-Cauchy sequences to cofinally Cauchy sequences;
    \item $\widehat{X}$ is cofinally complete and every cofinally Bourbaki quasi–Cauchy sequence in $X$ is cofinally Cauchy.
\end{enumerate}
\end{theorem}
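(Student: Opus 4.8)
The plan is to run the same four-step cycle as in the immediately preceding completion theorem, systematically replacing ``cofinally Bourbaki-complete,'' ``CBC-regular,'' and ``cofinally Bourbaki-Cauchy'' by ``cofinally complete,'' ``CC-regular,'' and ``cofinally Cauchy,'' and to lean on the equivalence of condition~(1) with condition~(6) of Theorem~\ref{cauchycontinuous_completion}, namely that $\widehat{X}$ is cofinally Bourbaki quasi-complete if and only if every cofinally Bourbaki quasi-Cauchy sequence in $X$ has a Cauchy subsequence. For $(1)\Rightarrow(2)$ I first recall that a cofinally Bourbaki quasi-complete space is cofinally complete (as already used in the proof of Theorem~\ref{uniformly_lipschitz_firsttheorem}), so $\widehat{X}$ is cofinally complete. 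Given a CC-regular $f\colon X\to Y$ and a cofinally Bourbaki quasi-Cauchy sequence $\langle x_n\rangle$ in $X$, Theorem~\ref{cauchycontinuous_completion} supplies a Cauchy, hence cofinally Cauchy, subsequence $\langle x_{n_k}\rangle$; CC-regularity makes $\langle f(x_{n_k})\rangle$ cofinally Cauchy, and since a sequence that possesses a cofinally Cauchy subsequence is itself cofinally Cauchy, $\langle f(x_n)\rangle$ is cofinally Cauchy. The implication $(2)\Rightarrow(3)$ is the trivial restriction to $Y=\mathbb{R}$, and $(4)\Rightarrow(1)$ copies the lifting argument from $(6)\Rightarrow(1)$ of Theorem~\ref{cauchycontinuous_completion}: approximate a cofinally Bourbaki quasi-Cauchy $\langle \widehat{x}_n\rangle$ in $\widehat{X}$ by points $x_n\in X$ with $\widehat{d}(\widehat{x}_n,x_n)\le \tfrac1n$, note $\langle x_n\rangle$ is cofinally Bourbaki quasi-Cauchy in $X$ and therefore cofinally Cauchy by~(4), so it clusters in the cofinally complete $\widehat{X}$, whence $\langle \widehat{x}_n\rangle$ clusters as well.

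The delicate step, and the main obstacle, is $(3)\Rightarrow(4)$. Arguing contrapositively, suppose $\widehat{X}$ is cofinally complete but some cofinally Bourbaki quasi-Cauchy sequence $\langle x_n\rangle$ of distinct points fails to be cofinally Cauchy. Because a sequence with a cluster point is automatically cofinally Cauchy, $\langle x_n\rangle$ has no cluster point, so $A=\{x_n:n\in\mathbb{N}\}$ is closed and discrete; I then define $f\colon X\to\mathbb{R}$ by $f(x_n)=2^n$ and $f\equiv 0$ off $A$, exactly as in $(3)\Rightarrow(4)$ of Theorem~\ref{cbqc_cbc}. Since $\langle 2^n\rangle$ is manifestly not cofinally Cauchy in $\mathbb{R}$ (distinct terms separated by at least $2$), the only thing that remains is to verify that $f$ is CC-regular, and this is where the real work lies.

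To establish CC-regularity, take a cofinally Cauchy sequence $\langle z_m\rangle$ in $X$; it is cofinally Cauchy in $\widehat{X}$, so cofinal completeness of $\widehat{X}$ provides a subsequence $\langle z_{m_j}\rangle$ converging to some $\widehat{p}\in\widehat{X}$. If infinitely many $z_{m_j}$ lie off $A$, then $\langle f(z_m)\rangle$ has an infinite constant-zero subsequence and is cofinally Cauchy; otherwise almost all $z_{m_j}$ lie in $A$. The crucial observation is that $\langle x_n\rangle$ has \emph{no} cofinally Cauchy subsequence whatsoever---this is precisely the contrapositive of the fact, used in the first paragraph, that a cofinally Cauchy subsequence forces the whole sequence to be cofinally Cauchy. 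Consequently the terms $z_{m_j}\in A$ cannot realise infinitely many distinct values: if they did, then selecting one representative per value and reindexing in increasing order of the $A$-index would yield a genuine subsequence of $\langle x_n\rangle$ converging to $\widehat{p}$ (convergence being order-independent), hence Cauchy and cofinally Cauchy, a contradiction. Therefore some value $x_{n_0}$ is attained infinitely often, $\langle f(z_m)\rangle$ has an infinite subsequence equal to the constant $2^{n_0}$, and in every case $\langle f(z_m)\rangle$ is cofinally Cauchy. This proves $f$ is CC-regular while failing to send the cofinally Bourbaki quasi-Cauchy $\langle x_n\rangle$ to a cofinally Cauchy sequence, contradicting~(3). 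I expect the entire difficulty of the theorem to be concentrated in this verification, where cofinal completeness of $\widehat{X}$ and the absence of cofinally Cauchy subsequences of $\langle x_n\rangle$ must be combined---a point more subtle than in the Bourbaki-complete analogue, since passing to the completion can create limit points of $A$ that are invisible in $X$ itself.
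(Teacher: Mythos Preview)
Your proof is correct and follows the same four-step cycle the paper uses (the paper merely writes ``Similarly'' and defers to the analogous Theorem~\ref{cbqc_cbc} and the preceding completion theorem). Your treatment of $(3)\Rightarrow(4)$ is considerably more explicit than the paper's ``clearly, $f$ is CBC-regular'' analogue: you correctly identify that the hypothesis ``$\widehat{X}$ is cofinally complete'' is what forces a cofinally Cauchy $\langle z_m\rangle$ to have a convergent subsequence in $\widehat{X}$, and that the non-cofinal-Cauchyness of $\langle x_n\rangle$ (hence of all its subsequences) then blocks infinitely many distinct $A$-values along that subsequence---a point the paper leaves entirely to the reader.
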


\section*{Acknowledgment:} The author thanks the referee for helpful suggestions and for the time spent in reviewing this work.


\begin{thebibliography}{9}
	


\bibitem{Adhikary2024} N. Adhikary and S. K. Pal, {\it Quasi-Cauchy sequences, the functions that preserve them, and a weakening of Bourbaki boundedness}, Real Anal. Exchange \textbf{49} (2), 315--344, 2024.

\bibitem{Aggarwal2016} M. Aggarwal and S. Kundu, {\it More about the cofinally complete spaces and the Atsuji spaces}, Houston J. Math. \textbf{42} (4), 1373--1395, 2016.

\bibitem{Aggarwal2017} M. Aggarwal and S. Kundu, {\it Boundedness of the relatives of uniformly continuous functions}, Topology Proc. \textbf{49}, 105--119, 2017.

\bibitem{Aggarwal2017M} M. Aggarwal and S. Kundu, {\it More on variants of complete metric spaces}, Acta Math. Hungar. \textbf{151} (2), 391--408, 2017.

\bibitem{Atsuji1958} M. Atsuji, {\it Uniform continuity of continuous functions of metric spaces}, Pacific J. Math. \textbf{8} (1), 11--16, 1958.

\bibitem{Beer1986} G. Beer, {\it More about metric spaces on which continuous functions are uniformly continuous}, Bull. Austral. Math. Soc. \textbf{33} (3), 397--406, 1986.

\bibitem{Beer2008} G. Beer, {\it Between compactness and completeness}, Topology Appl. \textbf{155} (6), 503--514, 2008.

\bibitem{BeerBook} G. Beer, {\it Bornologies and Lipschitz analysis}, CRC Press, Boca Raton, FL, 2023.




\bibitem{Beer2015} G. Beer and M. I. Garrido, {\it Locally Lipschitz functions, cofinal completeness, and UC spaces}, J. Math. Anal. Appl. \textbf{428} (2), 804--816, 2015.

\bibitem{Beer2016} G. Beer and M. I. Garrido, {\it On the uniform approximation of Cauchy continuous functions}, Topology Appl. \textbf{208}, 1--9, 2016.

\bibitem{Beer2020}  G. Beer, L. C. García-Lirola and M. I. Garrido, {\it Stability of Lipschitz-type functions under pointwise product and reciprocation}, Revista de la Real Academia de Ciencias Exactas, Físicas y Naturales. Serie A. Matemáticas, \textbf{114}, 1--16, 2020.

\bibitem{Das2021} P. Das, S. K. Pal, and N. Adhikary, {\it On notions of precompactness, continuity and Lipschitz functions associated with quasi-Cauchy sequence}, 2021; ArXiv:2103.01659.

\bibitem{Das2024} P. Das, N. Adhikary, and S. K. Pal, {\it On certain new types of completeness properties using infinite chainability and associated metrization problems in uniform spaces}, Topology Appl. \textbf{341}, 2024; Article ID: 108462.

\bibitem{Doss1947} R. Doss, {\it On uniformly continuous functions in metrizable spaces}, Proc. Math. Phys. Soc. Egypt \textbf{3}, 1--6, 1947.

\bibitem{Dugundji} J. Dugundji, {\it Topology}, Allyn and Bacon, Boston, 1966.

\bibitem{Garrido2014} M. I. Garrido and A. S. Meroño, {\it New types of completeness in metric spaces}, Ann. Fenn. Math. \textbf{39} (2), 733--758, 2014.

\bibitem{Ghosh2025} A. Ghosh, {\it Uniformly Star Superparacompact Subsets and Spaces}, 2025, ArXiv:2505.00492.

\bibitem{Gupta2020} L. Gupta and S. Kundu, {\it Functions that preserve certain classes of sequences and locally Lipschitz functions}, Annales Fennici Mathematici, \textbf{45}(2), 699--722, 2020.

\bibitem{Gupta2021} L. Gupta and S. Kundu, {\it Cofinal completion vis-á-vis Cauchy continuity and total boundedness}, Topology Appl. \textbf{290}, 2021; Article ID: 107576.


\bibitem{Howes1971} N. R. Howes, {\it On completeness}, Pacific J. Math. \textbf{38}, 431--440, 1971.


\bibitem{Jain2006} T. Jain and S. Kundu, {\it Atsuji spaces: equivalent conditions}, Topology Proc. \textbf{30} (1), 285--296, 2006.

\bibitem{Kundu2023} S. Kundu, M. Aggarwal, and L. Gupta, {\it Cofinally complete metric spaces and related functions}, World Scientific, Singapore, 2023.

\bibitem{Kundu2017} S. Kundu, M. Aggarwal and S. Hazra, {\it Finitely chainable and totally bounded metric spaces: Equivalent characterizations}, Topology Appl.\textbf{ 216}, 59--73, 2017.



\bibitem{Thesis} A. S. Meroño, {\it Bourbaki-complete spaces and Samuel realcompactification}, 2021; ArXiv:2111.05975.

\bibitem{Nagata1950} J. Nagata, {\it On the uniform topology of bicompactifications}, J. Inst. Polytech. Osaka City Univ. Ser. A. Math. \textbf{1}, 28--38, 1950.


















\end{thebibliography}
\end{document}